\newcommand{\ZZ}{\mathbb{Z}}
\newcommand{\HS}{\mathrm{HS}}
\newcommand{\cone}{\mathrm{cone}}
\newcommand{\CMtype}{\text{CM-type}}
\newcommand{\lc}{\mathrm{lc}}
\newcommand{\Ebeta}{\widehat{\beta}}
\newcommand{\Tor}{\mathrm{Tor}}
\let\To=\longrightarrow
\def\opn#1#2{\def#1{\operatorname{#2}}} 
\opn\chara{char} \opn\length{\ell} \opn\pd{pd} \opn\rk{rk}
\opn\projdim{proj\,dim} \opn\injdim{inj\,dim} \opn\rank{rank}
\opn\depth{depth} \opn\grade{grade} \opn\height{height}
\opn\embdim{emb\,dim} \opn\codim{codim}
\opn\Tr{Tr} \opn\bigrank{big\,rank}
\opn\superheight{superheight}\opn\lcm{lcm}
\opn\trdeg{tr\,deg}
\opn\reg{reg} \opn\lreg{lreg} \opn\ini{in} \opn\lpd{lpd}
\opn\size{size}\opn\bigsize{bigsize}
\opn\cosize{cosize}\opn\bigcosize{bigcosize}
\opn\sdepth{sdepth}\opn\sreg{sreg}
\opn\link{link}\opn\fdepth{fdepth}
\newtheoremstyle{theorem}
{12pt} 
{12pt} 
{\slshape 

} 
{} 
{\bfseries} 
{} 
{ } 
{} 
\newtheoremstyle{definition}
{12pt} 
{12pt} 
{\slshape 

} 
{} 
{\bfseries} 
{} 
{ } 
{} 
\newtheoremstyle{algorithm}
{10pt} 
{10pt} 
{\slshape 

} 
{} 
{\bfseries} 
{} 
{ } 
{} 
\theoremstyle{theorem}
\newtheorem{theorem}{Theorem}
\newtheorem{corollary}[theorem]{Corollary}
\newtheorem{proposition}[theorem]{Proposition}
\newtheorem{lemma}[theorem]{Lemma}
\newtheorem{setup}[theorem]{Set-up}
\theoremstyle{definition}
\theoremstyle{remark}
\newtheorem{remark}[theorem]{Remark}
\newtheorem{question}[theorem]{Question}
\theoremstyle{algorithm}
\numberwithin{theorem}{section}
\title{Extremal Betti numbers of some Cohen-Macaulay binomial edge ideals}
\author{Carla Mascia, Giancarlo Rinaldo \footnote{Email addresses: carla.mascia@unitn.it, giancarlo.rinaldo@unitn.it} \\ University of Trento}
\begin{document}
\maketitle

\begin{abstract}
We provide the regularity and the Cohen-Macaulay type of binomial edge ideals of Cohen-Macaulay cones, and we show the extremal Betti numbers of some classes of Cohen-Macaulay binomial edge ideals: Cohen-Macaulay bipartite and fan graphs. In addition, we compute the Hilbert-Poincaré series of the binomial edge ideals of some Cohen-Macaulay bipartite graphs. 
\end{abstract}

\section*{Introduction}
Binomial edge ideals were introduced in 2010 independently by Herzog et al. in \cite{HHHKR} and  by Ohtani in \cite{MO}. They are a natural generalization
of the ideals of 2-minors of a $2\times n$-generic matrix: their generators are those 2-minors whose column indices correspond to the edges of a graph. More precisely, given a simple graph $G$ on $[n]$ and the polynomial ring $S=K[x_1,\dots,x_n,y_1, \dots,y_n]$ with $2n$ variables over a field $K$, the \textit{binomial edge ideal} associated to $G$ is the ideal $J_G$ in $S$ generated by all the binomials $\{x_iy_j -x_jy_i | i,j \in V(G) \text{ and } \{i,j\} \in E(G)\}$, where $V(G)$ denotes the vertex set and $E(G)$ the edge set of $G$. 
Many algebraic and homological properties of these ideals have been investigated, such as the Castelnuovo-Mumford regularity and the projective dimension, see for instance \cite{HHHKR}, \cite{EHH}, \cite{MK}, \cite{KM}, and \cite{RR}. 
Important invariants which are provided by the graded finite free resolution are the extremal Betti numbers of $J_G$.
Let $M$ be a finitely graded $S$-module. Recall the Betti number $\beta_{i,i+j}(M) \neq 0$ is called \textit{extremal} if $\beta_{k,k+\ell} =0$ for all pairs $(k,\ell) \neq (i,j)$, with $k \geq i, \ell \geq j$. A nice property of the extremal Betti numbers is that $M$ has an unique extremal Betti number if and only if $\beta_{p,p+r}(M) \neq 0$, where $p = \projdim M $ and $r = \reg M$. In this years, extremal Betti numbers were studied by different authors, also motivated by Ene, Hibi, and Herzog's conjecture (\cite{EHH}, \cite{HR}) on the equality of the extremal Betti numbers of $J_G$ and $\mathrm{in}_<(J_G)$. Some works in this direction are \cite{B}, \cite{DH}, and \cite{D}, but the question has been completely and positively solved by Conca and Varbaro in \cite{CV}. The extremal Betti numbers of $J_G$ are explicitly provided by Dokuyucu, in \cite{D}, when $G$ is a cycle or a complete bipartite graph, by Hoang, in \cite{H}, for some closed graphs, and by Herzog and Rinaldo, in \cite{HR}, and Mascia and Rinaldo, in \cite{MR}, when $G$ is a block graph. In this paper, we show the extremal Betti numbers for binomial edge ideals of some classes of Cohen-Macaualy graphs: cones, bipartite and fan graphs. The former were introduced and investigated by Rauf and the second author in \cite{RR}. They construct Cohen-Macaulay graphs by means of the formation of cones: connecting all the vertices of two disjoint Cohen-Macaulay graphs to a new vertex, the resulting graph is Cohen-Macaulay. For these graphs, we give the regularity  and also the Cohen-Macaulay type (see Section \ref{Sec: cones}) . The latter two are studied by Bolognini, Macchia and Strazzanti in \cite{BMS}. They classify the bipartite graphs whose binomial edge ideal is Cohen-Macaulay. In particular, they present a family of bipartite graphs $F_m$ whose binomial edge ideal is Cohen-Macaulay, and they prove that, if $G$ is connected and bipartite, then $J_G$ is Cohen-Macaulay if and only if $G$ can be obtained recursively by gluing a finite number of graphs of the form $F_m$ via two operations. In the same article, they describe a new family of Cohen-Macaulay binomial edge ideals associated with non-bipartite graphs, the fan graphs. For both these families, in \cite{JK}, Jayanthan and Kumar compute a precise expression for the regularity, whereas in this work we provide the unique extremal Betti number of the binomial edge ideal of these graphs (see Section \ref{Sec: bipartite and fan graphs} and Section \ref{Sec: Cohen-Macaulay bipartite graphs}). In addition, we exploit the unique extremal Betti number of $J_{F_m}$ to describe completely its Hilbert-Poincaré series (see Section  \ref{Sec: bipartite and fan graphs}). 

\section{Betti numbers of binomial edge ideals of disjoint graphs}\label{Sec: preliminaries}

In this section we recall some concepts and notation on graphs that we will use in the article.

Let $G$ be a simple graph with vertex set $V(G)$ and edge set $E(G)$. A subset $C$ of $V(G)$ is called a \textit{clique} of $G$ if for all $i$ and $j$ belonging to $C$ with $i \neq j$ one has $\{i, j\} \in E(G)$. The \textit{clique complex} $\Delta(G)$ of $G$ is the simplicial complex of all its cliques. A clique $C$ of $G$ is called \textit{face} of $\Delta(G)$ and its \textit{dimension} is given by $|C| -1$.  A vertex of $G$ is called {\em free vertex} of $G$ if it belongs to exactly one maximal clique of $G$. A vertex of $G$ of degree 1 is called \textit{leaf} of $G$. A vertex of $G$ is called a \textit{cutpoint} if the removal of the vertex increases the number of connected components. A graph $G$ is {\em decomposable}, if there exist two subgraphs $G_1$ and $G_2$ of $G$,  and a decomposition $G=G_1\cup G_2$ with $\{v\}=V(G_1)\cap V(G_2)$, where $v$ is a free vertex of $G_1$ and $G_2$.

\begin{setup}\label{setup} Let $G$ be a graph on $[n]$ and $u \in V(G)$ a cutpoint of $G$. We denote by
\begin{align*}
& G' \text{ the graph obtained from } G \text{ by connecting all the vertices adjacent to } u, \\
& G'' \text{ the graph obtained from } G \text{ by removing } u, \\
& H \text{ the graph obtained from } G'  \text{ by removing } u.
\end{align*}

\end{setup}
Using the notation introduced in Set-up \ref{setup}, we consider the following short exact sequence
\begin{equation}\label{Exact}
 0\To S/J_G \To S/J_{G'}\oplus S/((x_u, y_u)+J_{G''})\To S/((x_u,y_u)+J_{H})  \To 0 
\end{equation}
For more details, see Proposition 1.4, Corollary 1.5 and Example 1.6 of \cite{R}. 
From (\ref{Exact}), we get the following long exact sequence of Tor modules
\begin{equation}\label{longexact}
\begin{aligned}
&\cdots\rightarrow T_{i+1,i+1+(j-1)}(S/((x_u,y_u)+J_H)) \rightarrow T_{i,i+j}(S/J_G) \rightarrow  \\
& \hspace{-0.4cm}T_{i,i+j}(S/J_{G'}) \oplus T_{i,i+j}(S/((x_u, y_u)+J_{G''})) \rightarrow T_{i,i+j}(S/((x_u,y_u)+J_H)) \rightarrow 
\end{aligned}
\end{equation}
where  $T_{i,i+j}^S(M)$ stands for $\Tor_{i,i+j}^S(M,K)$ for any $S$-module $M$, and $S$ is omitted if it is clear from the context. 

\

\begin{lemma}\label{Lemma: beta_p,p+1 e beta_p,p+2}
Let $G$ be a connected graph on $[n]$. Suppose $J_G$ be Cohen-Macaulay, and let $p=\projdim S/J_G$. Then 
\begin{enumerate}
\item[(i)] $\beta_{p,p+1} (S/J_G) \neq 0$ if and only if $G$ is a complete graph on $[n]$.
\item[(ii)] If $G= H_1 \sqcup H_2$, where $H_1$ and $H_2$ are graphs on disjoint vertex sets, then $\beta_{p,p+2} (S/J_{G}) \neq 0$  if and only if $H_1$ and $H_2$ are complete graphs.
\end{enumerate}
\end{lemma}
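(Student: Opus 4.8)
Recall $p=\projdim S/J_G$. Since $G$ is connected one always has $\dim S/J_G\ge n+1$ (some minimal prime of $J_G$ already has that dimension), and when $J_G$ is Cohen--Macaulay it is unmixed, so $\dim S/J_G=n+1$ and hence $p=2n-(n+1)=n-1$. The lemma is essentially a statement about the \emph{linear strand} of $S/J_G$, i.e.\ the Betti numbers $\beta_{i,i+1}$, and part (ii) will be deduced from part (i). For the ``if'' direction of (i): if $G=K_n$ then $J_G$ is the ideal of $2$-minors of a generic $2\times n$ matrix, so its minimal free resolution is the Eagon--Northcott complex; its last free module is $S(-n)^{n-1}$, whence $\beta_{n-1,n}(S/J_{K_n})\ne0$.

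For the ``only if'' direction of (i) I would invoke the known description of the linear strand of a binomial edge ideal: $\beta_{i,i+1}(S/J_G)\ne0$ exactly when $G$ contains a clique on $i+1$ vertices. As $|V(G)|=n$, the hypothesis $\beta_{p,p+1}(S/J_G)=\beta_{n-1,n}(S/J_G)\ne0$ then forces $G$ to have an $n$-clique, i.e.\ $G=K_n$. A self-contained alternative is an induction on $n$ through the long exact sequence \eqref{longexact} at a cutpoint $u$ of $G$ (a non-complete connected graph with Cohen--Macaulay binomial edge ideal has a cutpoint, by the accessibility results in \cite{BMS}). One first checks that the connecting term $T_{n,n}\!\left(S/((x_u,y_u)+J_H)\right)$ vanishes for $n\ge3$: tensoring the resolution of $S'/J_H$ with the Koszul complex on $x_u,y_u$ expresses it as $\bigoplus_{a+b=n}T^{S'}_{a,a}(S'/J_H)\otimes\Lambda^b$, and $T^{S'}_{a,a}(S'/J_H)=0$ for $a\ge1$ while $\Lambda^b$ of a $2$-dimensional space vanishes for $b\ge3$. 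Hence $T_{n-1,n}(S/J_G)$ injects into $T_{n-1,n}(S/J_{G'})\oplus T_{n-1,n}\!\left(S/((x_u,y_u)+J_{G''})\right)$, and one treats these two summands inductively, using that $G''=G\setminus u$ is not $K_{n-1}$ (because $u$ is a cutpoint) and that $u$ is a free vertex of $G'$. The main obstacle along this second route is that $\beta_{n-1,n}(S/J_{G'})$ need not vanish — indeed $G'$ may equal $K_n$ — so one must actually show that the corresponding component of the map into $T_{n-1,n}\!\left(S/((x_u,y_u)+J_H)\right)$ is injective, not merely bound Betti numbers from above; this injectivity is precisely what the clique description of the linear strand makes automatic.

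For (ii): as $H_1,H_2$ live on disjoint vertex sets, $J_G$ is the sum of the extensions of $J_{H_1}$ and $J_{H_2}$ to $S$, so $S/J_G\cong(S_1/J_{H_1})\otimes_K(S_2/J_{H_2})$, where $S_k$ is the polynomial ring on the variables of $H_k$. The Künneth formula over the field $K$ gives $\beta^S_{i,j}(S/J_G)=\sum_{a+b=i}\sum_{c+e=j}\beta^{S_1}_{a,c}(S_1/J_{H_1})\,\beta^{S_2}_{b,e}(S_2/J_{H_2})$; in particular $p=p_1+p_2$ with $p_k=\projdim S_k/J_{H_k}$. Now consider $\beta_{p,p+2}(S/J_G)$: in any nonzero summand $a\le p_1$, $b\le p_2$ and $a+b=p_1+p_2$ force $a=p_1$, $b=p_2$; since $J_{H_k}\subseteq\mathfrak{m}^2$ one has $\beta_{p_k,p_k}=0$ (for $p_k\ge1$), so $c\ge p_1+1$, $e\ge p_2+1$, and $c+e=p_1+p_2+2$ then forces $c=p_1+1$, $e=p_2+1$. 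Hence $\beta_{p,p+2}(S/J_G)=\beta_{p_1,p_1+1}(S_1/J_{H_1})\cdot\beta_{p_2,p_2+1}(S_2/J_{H_2})$, which is nonzero iff both factors are, iff — by part (i) applied to each $H_k$ (a disconnected $H_k$ makes the corresponding factor $0$ and is itself not complete) — $H_1$ and $H_2$ are both complete. (One should of course take $H_1,H_2$ without isolated vertices, otherwise the displayed reduction degenerates.) So (ii) is pure bookkeeping once (i) is in hand.
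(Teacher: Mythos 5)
Your proposal is correct and follows essentially the same route as the paper: part (i) rests on the Herzog--Saeedi Madani--Kiani description of the linear strand ($\beta_{i,i+1}(S/J_G)=i\,f_i(\Delta(G))$, i.e.\ nonvanishing is governed by $(i+1)$-cliques) applied at $i=p=n-1$, and part (ii) is the Künneth/tensor-product reduction $\beta_{p,p+2}(S/J_G)=\beta_{p_1,p_1+1}(S_1/J_{H_1})\beta_{p_2,p_2+1}(S_2/J_{H_2})$ followed by part (i) — exactly as in the paper. Your extra remarks (the Eagon--Northcott argument for the ``if'' direction, the honest assessment that the purely inductive alternative for (i) would still need the clique description, and the caveat about isolated vertices in (ii), which the paper's own formula also implicitly assumes away) are sound but do not change the substance.
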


\begin{proof}
(i) In \cite{HMK}, the authors prove that for any simple graph $G$ on $[n]$, it holds
\begin{equation}\label{eq: linear strand}
\beta_{i,i+1} (S/J_G) = i f_{i}(\Delta(G))
\end{equation}
where $\Delta(G)$ is the clique complex of G and $f_{i}(\Delta(G))$ is the number of faces of $\Delta(G)$ of dimension $i$. Since $J_G$ is Cohen-Macaulay, it holds $p=n-1$, and the statement is an immediate consequence of Equation (\ref{eq: linear strand}), with $i=p$.\\

\noindent (ii) Since $J_G$ is generated by homogeneous binomials of degree 2, $\beta_{1,1}(S/J_G) = 0$. This implies that $\beta_{i,i}(S/J_G) = 0$ for all $i \geq 1$. For all $j\geq 1$, we have

\begin{equation*}
\beta_{p,p+j} (S/J_G) = \sum_{\substack{1 \leq j_1, j_2 \leq r \\ j_1+j_2 = j}} \beta_{p_1, p_1+j_1}(S_1/J_{H_1})\beta_{p_2, p_2+j_2}(S_2/J_{H_2})
\end{equation*}
For $j=2$, we get 
\begin{equation}\label{eq on beta_p,p+2 disjoint graphs}
\beta_{p,p+2} (S/J_G) = \beta_{p_1, p_1+1}(S_1/J_{H_1})\beta_{p_2, p_2+1}(S_2/J_{H_2}).
\end{equation}
By part (i), both the Betti numbers on the right are non-zero if and only if $H_1$ and $H_2$ are complete graphs, and the thesis follows. 
\end{proof}

\ \\

Let $M$ be a finitely graded $S$-module. Recall the Cohen-Macaulay type of $M$, that we denote by $\CMtype(M)$, is $\beta_p(M)$, that is the sum of all $\beta_{p,p+i}(M)$, for $i=0,\dots,r$, where $p= \projdim M$, and $r=\reg M$.  When $S/J_G$ has an unique extremal Betti number, we denote it by $\Ebeta(S/J_G)$.

\begin{lemma}\label{Lemma: Betti numbers of disjoint graphs}
Let $H_1$ and $H_2$ be connected graphs on disjoint vertex sets and $G=H_1\sqcup H_2$. Suppose $J_{H_1}$ and $J_{H_2}$ be Cohen-Macaulay binomial edge ideals. Let $S_i = K[\{x_j,y_j\}_{j \in V(H_i)}]$ for $i=1,2$. Then 
\begin{enumerate}
\item[(i)] $\CMtype (S/J_{G}) =  \CMtype(S_1/J_{H_1})  \CMtype(S_2/J_{H_2})$.
\item[(ii)] $\Ebeta(S/J_G) =  \Ebeta(S_1/J_{H_1})\Ebeta(S_2/J_{H_2})$.
\end{enumerate}

\end{lemma}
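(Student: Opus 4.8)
The plan is to exploit the Künneth-type formula for the Tor modules of a tensor product of algebras over disjoint variable sets, exactly as it was already used in the proof of Lemma~\ref{Lemma: beta_p,p+1 e beta_p,p+2}(ii). Since $H_1$ and $H_2$ live on disjoint vertex sets, $S = S_1 \otimes_K S_2$ and $S/J_G = (S_1/J_{H_1}) \otimes_K (S_2/J_{H_2})$, so $\Tor^S(S/J_G, K) = \Tor^{S_1}(S_1/J_{H_1}, K) \otimes_K \Tor^{S_2}(S_2/J_{H_2}, K)$, and in particular $p = \projdim S/J_G = p_1 + p_2$ where $p_i = \projdim S_i/J_{H_i}$, while $\reg S/J_G = r_1 + r_2$ where $r_i = \reg S_i/J_{H_i}$. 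Graded piece by graded piece this gives
\begin{equation*}
\beta_{p,\,p+j}(S/J_G) = \sum_{j_1 + j_2 = j} \beta_{p_1,\,p_1+j_1}(S_1/J_{H_1})\,\beta_{p_2,\,p_2+j_2}(S_2/J_{H_2}).
\end{equation*}

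For part (i), I would sum this identity over $j = 0, \dots, r_1 + r_2$. The right-hand side becomes a Cauchy product, and since $\beta_{p_i,\,p_i+j_i}(S_i/J_{H_i}) = 0$ whenever $j_i > r_i$, the double sum factors as $\bigl(\sum_{j_1=0}^{r_1}\beta_{p_1,\,p_1+j_1}(S_1/J_{H_1})\bigr)\bigl(\sum_{j_2=0}^{r_2}\beta_{p_2,\,p_2+j_2}(S_2/J_{H_2})\bigr)$, which is exactly $\CMtype(S_1/J_{H_1})\,\CMtype(S_2/J_{H_2})$. One should note for cleanliness that since both $J_{H_i}$ are generated in degree $2$, the bottom of each resolution has $\beta_{p_i,\,p_i}(S_i/J_{H_i}) = 0$, so the sums really start at $j_i = 1$; this is not needed for the product formula but keeps the statement consistent with the rest of the section.

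For part (ii), the key point is that $S_i/J_{H_i}$ having a \emph{unique} extremal Betti number means, by the characterization recalled in the introduction, that $\beta_{p_i,\,p_i+r_i}(S_i/J_{H_i}) \neq 0$ and this is the only nonzero Betti number in the region $k \geq p_i$, $\ell \geq r_i$; equivalently the top graded strand of the resolution is concentrated in a single spot. Using $\beta_{p,\,p+j}(S/J_G) = \sum_{j_1+j_2=j}\beta_{p_1,p_1+j_1}(\cdot)\beta_{p_2,p_2+j_2}(\cdot)$ with $j = r_1 + r_2$, and the fact that $\beta_{p_i,\,p_i+j_i}(S_i/J_{H_i}) = 0$ for $j_i > r_i$, every term with $(j_1, j_2) \neq (r_1, r_2)$ vanishes, leaving $\beta_{p,\,p+r_1+r_2}(S/J_G) = \beta_{p_1,\,p_1+r_1}(S_1/J_{H_1})\,\beta_{p_2,\,p_2+r_2}(S_2/J_{H_2}) \neq 0$. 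It then remains to check that $(p, p + r_1 + r_2)$ is the unique extremal position for $S/J_G$, i.e. that $\beta_{k,\,k+\ell}(S/J_G) = 0$ for all $(k,\ell) \neq (p, r_1+r_2)$ with $k \geq p$, $\ell \geq r_1 + r_2$: since $k \geq p = p_1 + p_2 = \projdim S/J_G$ forces $k = p$, and $\reg S/J_G = r_1 + r_2$ forces $\ell \leq r_1 + r_2$, hence $\ell = r_1 + r_2$, there is nothing left to check — uniqueness is automatic once we know the projective dimension and regularity add. So $\Ebeta(S/J_G) = \beta_{p,\,p+r_1+r_2}(S/J_G) = \Ebeta(S_1/J_{H_1})\,\Ebeta(S_2/J_{H_2})$.

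The only mild obstacle is making the bookkeeping on $\projdim$ and $\reg$ of the tensor product rigorous; this is standard (the minimal free resolution of $S/J_G$ over $S$ is the tensor product over $K$ of the minimal resolutions of $S_i/J_{H_i}$ over $S_i$, base-changed to $S$, because the two sets of variables are independent), and it is already implicitly invoked in the proof of Lemma~\ref{Lemma: beta_p,p+1 e beta_p,p+2}. I would cite that fact rather than reprove it, and then both parts follow in a few lines of index chasing.
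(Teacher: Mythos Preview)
Your proof is correct and follows essentially the same approach as the paper: both use the tensor-product structure of the minimal free resolution over disjoint variable sets to obtain a K\"unneth-type identity for Betti numbers, and then the same index bookkeeping with $p=p_1+p_2$ and $r=r_1+r_2$. The only cosmetic difference is that for part~(i) the paper works directly with the total Betti numbers $\beta_t(S/J_G)=\sum_k\beta_k(S_1/J_{H_1})\beta_{t-k}(S_2/J_{H_2})$ and specializes to $t=p$, whereas you sum the graded identity over $j$; these are equivalent one-line computations.
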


\begin{proof}
(i) The equality $J_{G} = J_{H_1} + J_{H_2}$ implies that the minimal graded free resolution of $S/J_G$ is the tensor product of the minimal graded free resolutions of $S_1/J_{H_1}$ and $S_2/J_{H_2}$, where $S_i = K[\{x_j,y_j\}_{j \in V(H_i)}]$ for $i=1,2$. Then 
\[
\beta_t(S/J_G) = \sum_{k=0}^t \beta_k(S_1/J_{H_1})\beta_{t-k}(S_2/J_{H_2}).
\]
Let $p = \projdim S/J_G$, that is $p=p_1+p_2$, where $p_i = \projdim S_i/J_{H_i}$ for $i=1,2$. Since $\beta_k (S_1/J_{H_1}) = 0$ for all $k > p_1$ and $\beta_{p-k}(S_2/J_{H_2})=0$ for all $k < p_1$, it follows
\[
\beta_p(S/J_G) = \beta_{p_1}(S_1/J_{H_1})\beta_{p_2}(S_2/J_{H_2}).
\]

\noindent (ii) Let $r= \reg S/J_G$. Consider 
\begin{equation*}
\beta_{p,p+r} (S/J_G) = \sum_{\substack{1 \leq j_1, j_2 \leq r \\ j_1+j_2 = r}} \beta_{p_1, p_1+j_1}(S_1/J_{H_1})\beta_{p_2, p_2+j_2}(S_2/J_{H_2}).
\end{equation*}
Since $\beta_{p_i, p_i+j_i}(S_i/J_{H_i}) =0$ for all $j_i > r_i$, where $r_i = \reg S_i/J_{H_i}$ for $i=1,2$, and $r = r_1 + r_2$, it follows 
\begin{equation*}
\beta_{p,p+r}(S/J_G) =  \beta_{p_1, p_1+r_1}(S_1/J_{H_1})\beta_{p_2, p_2+r_2}(S_2/J_{H_2}).
\end{equation*}
\end{proof}

Let $G$ be a simple connected graph on $[n]$. We recall that if $J_G$ is Cohen-Macaulay, then $p=\projdim S/J_G = n-1$, and it admits an unique extremal Betti number, that is $\Ebeta(S/J_G) = \beta_{p,p+r} (S/J_G)$, where $r = \reg S/J_G$.

\section{Regularity and Cohen-Macaulay type of cones}\label{Sec: cones}

The \textit{cone} of $v$ on $H$, namely $\cone(v,H)$, is the graph with vertices $V(H) \cup \{v\}$ and edges $E(H) \cup \{\{v,w\} \mid w \in V(H)\}$. 

\begin{lemma}\label{sum reg}
Let $G=\cone(v, H_1 \sqcup \dots \sqcup H_s)$, with $s\geq 2$. Then 

\[
\reg S/J_G  = \max \left\lbrace \sum_{i=1}^s \reg S/J_{H_i}, 2\right\rbrace.
\]

\end{lemma}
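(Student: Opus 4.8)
The plan is to apply Set-up \ref{setup} at $u = v$: removing $v$ disconnects $G$ into $H_1 \sqcup \dots \sqcup H_s$, so $v$ is a cutpoint and the set-up applies. Since $v$ is adjacent to every other vertex of $G$, connecting all neighbours of $v$ turns $G$ into a complete graph, so $G' = K_n$ with $n = |V(G)|$; hence $H = G' \setminus v$ is the complete graph on $V(H_1) \cup \dots \cup V(H_s)$, while $G'' = G \setminus v = H_1 \sqcup \dots \sqcup H_s$. I then exploit the exact sequences (\ref{Exact})--(\ref{longexact}). First I record three regularities: $\reg S/J_{G'} = \reg S/J_{K_n} = 1$ and $\reg S/((x_v, y_v) + J_H) = 1$ (ideals of maximal minors of a generic matrix have linear resolutions, and adjoining the regular sequence $x_v, y_v$ does not change regularity); and, writing $S'' = K[\{x_j, y_j\}_{j \neq v}]$, the minimal graded free resolution of $S''/J_{G''}$ is the tensor product of those of the $S_i/J_{H_i}$, so $\reg S/((x_v, y_v) + J_{G''}) = \reg S''/J_{G''} = \sum_{i=1}^{s} \reg S_i/J_{H_i} =: R$.

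For the upper bound I read (\ref{longexact}) degree by degree: if $\beta_{i, i+j}(S/J_G) \neq 0$, then either the map $T_{i, i+j}(S/J_G) \to T_{i, i+j}(S/J_{G'}) \oplus T_{i, i+j}(S/((x_v, y_v) + J_{G''}))$ is nonzero, and then one of its targets is nonzero, forcing $j \leq \reg S/J_{G'} = 1$ or $j \leq \reg S/((x_v, y_v) + J_{G''}) = R$; or it is zero, and then $T_{i, i+j}(S/J_G)$ is a quotient of $T_{i+1, (i+1)+(j-1)}(S/((x_v, y_v) + J_H))$, which is therefore nonzero, forcing $j - 1 \leq 1$. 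In all cases $j \leq \max\{R, 2\}$, hence $\reg S/J_G \leq \max\{R, 2\}$.

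For the lower bound I distinguish two cases. If $R \geq 2$, I choose $i$ with $\beta_{i, i+R}(S''/J_{G''}) \neq 0$ (possible since $\reg S''/J_{G''} = R$); in (\ref{longexact}) the neighbouring terms $T_{i, i+R}(S/J_{G'})$ and $T_{i, i+R}(S/((x_v, y_v) + J_H))$ vanish, their regularities being $1 < R$, so $T_{i, i+R}(S/J_G)$ surjects onto $T_{i, i+R}(S/((x_v, y_v) + J_{G''})) \neq 0$; hence $\reg S/J_G \geq R$, and together with the upper bound $\reg S/J_G = R = \max\{R, 2\}$. If $R \leq 1$, then $\max\{R, 2\} = 2$ and it suffices to see $\reg S/J_G \geq 2$; but $G$ is not complete, since two vertices lying in distinct components $H_1$, $H_2$ are non-adjacent in $G$ (this is where $s \geq 2$ enters), and $\reg S/J_G = 1$ would force $G$ to be complete, so $\reg S/J_G \geq 2$. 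Combining, $\reg S/J_G = \max\{R, 2\}$.

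I expect the one genuinely delicate point to be the lower bound: the temptation is to pin down a specific (extremal) Betti number of $S/J_G$, but doing so would require structural control on the $H_i$ that the statement does not provide. The remedy is to use (\ref{longexact}) purely formally, transporting through it only the \emph{existence} --- guaranteed by $\reg S''/J_{G''} = R$ --- of some nonvanishing $\beta_{i, i+R}(S''/J_{G''})$. The auxiliary inputs (linear resolutions of minors of a generic matrix, invariance of regularity under a linear regular sequence, additivity of regularity under disjoint unions via tensor products of resolutions, and the characterization that $J_G$ has a linear resolution if and only if $G$ is complete) are standard and can be quoted.
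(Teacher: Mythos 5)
Your proof is correct, and its skeleton coincides with the paper's: the same identification $G'=K_n$, $G''=H_1\sqcup\dots\sqcup H_s$, $H=K_{n-1}$ in Set-up \ref{setup}, the same regularity values for the three outer terms, and the same upper bound (the paper simply quotes the standard inequality $\reg A\leq\max\{\reg B,\reg C+1\}$ for $0\to A\to B\to C\to 0$, which is exactly what your degree-by-degree reading of (\ref{longexact}) re-proves). The one genuine divergence is the lower bound when $R=\sum_i\reg S/J_{H_i}\geq 2$: the paper gets $\reg S/J_G\geq \reg S/J_{H_1\sqcup\cdots\sqcup H_s}=R$ in one line from Matsuda--Murai's induced-subgraph monotonicity \cite[Corollary 2.2]{MM}, since $G''$ is an induced subgraph of $G$, whereas you transport a nonvanishing Tor of $S/((x_v,y_v)+J_{G''})$ back through (\ref{longexact}), using that the two complete-graph terms vanish in internal degree $i+R$ because their regularity is $1<R$. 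Both are valid; the paper's route is shorter and leans on a known external result, while yours stays entirely inside the homological machinery already set up for the rest of the section. The only point you gloss in your version is the passage from $\beta^{S''}_{i,i+R}(S''/J_{G''})\neq 0$ to $T_{i,i+R}(S/((x_v,y_v)+J_{G''}))\neq 0$ over $S$: this holds because the $S$-resolution is the $S''$-resolution tensored with the Koszul complex on $x_v,y_v$, so $\beta^S_{i,i+R}\geq\beta^{S''}_{i,i+R}$, but it deserves a sentence. The case $R\leq 1$ is handled identically in both arguments ($G$ is not complete since $s\geq 2$, and regularity $1$ would force $J_G$ to have a linear resolution, i.e.\ $G$ complete).
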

\begin{proof}
Consider the short exact sequence (\ref{Exact}), with $G=\cone(v, H_1 \sqcup \dots \sqcup H_s)$ and $u=v$, then $G' = K_n$, the complete graph on $[n]$, $G'' = H_1 \sqcup \dots \sqcup H_s$, and $H=K_{n-1}$, where $n = |V(G)|$. Since $G'$ and $H$ are complete graphs, the regularity of $S/J_{G'}$ and $ S/((x_u,y_u)+J_{H})$ is 1. Whereas the regularity of $S/((x_u,y_u)+J_{G''})$ is given by $\reg S/J_{H_1} + \cdots +\reg S/J_{H_s}$. We get the following bound on the regularity of $S/J_G$
\begin{eqnarray*}
  \reg S/J_G\hspace{-0.2cm}&\leq &\hspace{-0.2cm}\max\left\lbrace\reg \frac{S}{J_{G'}},\reg \frac{S}{((x_u, y_u)+J_{G''})}, \reg \frac{S}{((x_u,y_u)+J_{H})} +1\right\rbrace\\
             &= &\hspace{-0.2cm}\max\left\lbrace 1, \sum_{i=1}^s \reg S/J_{H_i}, 2\right\rbrace.
\end{eqnarray*}
Suppose $\sum_{i=1}^s \reg S/J_{H_i} \geq 2$, hence $\reg S/J_G \leq \sum_{i=1}^s \reg S/J_{H_i}$. Since $H_1 \sqcup \dots \sqcup H_s$ is an induced subgraph of $G$, by \cite[Corollary 2.2]{MM} of Matsuda and Murai we have
\[
\reg S/J_{G} \geq \reg S/J_{H_1\sqcup \cdots \sqcup H_s} = \sum_{i=1}^s \reg S/J_{H_i}.
\]
Suppose now $\sum_{i=1}^s \reg S/J_{H_i} < 2$, hence $\reg S/J_G \leq 2$. Since $G$ is not a complete graph, $\reg S/J_G \geq 2$, and the statement follows.

\end{proof}

Observe that it happens $\reg S/J_G  = 2$, for $G=\cone(v, H_1 \sqcup \dots \sqcup H_s)$, with $s \geq 2$, if and only if all the $H_i$ are isolated vertices except for at most two which are complete graphs.
\ \\

We are going to give a description of the Cohen-Macaulay type and some Betti numbers of $S/J_G$ when $S/J_G$ is Cohen-Macaulay, and $G$ is a cone, namely $G=\cone(v,H)$. By \cite[Lemma 3.4]{RR}, to have Cohen-Macaulayness it is necessary that $H$ has exactly two connected components and both are Cohen-Macaulay (see also Corollaries 3.6 and 3.7 and Theorem 3.8 in \cite{RR}).

\begin{proposition}\label{prop: cm-type cone}
Let $G=\cone(v, H_1 \sqcup H_2)$ on $[n]$, with $J_{H_1}$ and $J_{H_2}$ Cohen-Macaulay binomial edge ideals. Then
\[
\CMtype(S/J_G) = n-2 + \CMtype(S/J_{H_1})\CMtype(S/J_{H_2}).
\]
In particular, the unique extremal Betti number of $S/J_G$ is given by
\begin{equation*}
\Ebeta(S/J_G) = 
\begin{cases}
\Ebeta(S_1/J_{H_1}) \Ebeta (S_2/J_{H_2}) & \text{ if } r >2 \\
n-2 +\Ebeta(S_1/J_{H_1}) \Ebeta(S_2/J_{H_2}) & \text{ if } r =2
\end{cases}
\end{equation*}
where $r= \reg S/J_G$. In addition, if $r >2$, it holds 
\begin{equation*}
\beta_{p,p+2} (S/J_G) = n-2.
\end{equation*}

\end{proposition}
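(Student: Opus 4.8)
The plan is to apply the long exact sequence (\ref{longexact}), taking the cutpoint $u$ to be $v$. Exactly as in the proof of Lemma \ref{sum reg} with $s=2$, we have $G'=K_n$, $G''=H_1\sqcup H_2$ and $H=K_{n-1}$. Write $p=\projdim S/J_G=n-1$, $p_i=\projdim S_i/J_{H_i}$, $r_i=\reg S_i/J_{H_i}$, let $\bar S=K[\{x_i,y_i\}_{i\in V(H_1)\cup V(H_2)}]$, and set $M''=S/((x_v,y_v)+J_{H_1\sqcup H_2})$ and $C=S/((x_v,y_v)+J_{K_{n-1}})$. First I would record the needed $\Tor$ modules. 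Since $S/J_{K_n}$ is determinantal with $\reg=1$, in homological degree $p$ its only nonzero $\Tor$ is $T_{p,p+1}(S/J_{K_n})=K^{\,n-1}$. Since $x_v,y_v$ is a regular sequence modulo $J_{K_{n-1}}$ and modulo $J_{H_1\sqcup H_2}$, the minimal resolutions of $C$ and $M''$ are the Koszul complex on $x_v,y_v$ tensored with the $\bar S$-resolutions of $\bar S/J_{K_{n-1}}$ and of $\bar S/J_{H_1\sqcup H_2}$, respectively; hence $\projdim C=n$ with $\beta_{n,n+1}(C)=n-2$ and $\beta_{n-1,(n-1)+j}(C)=0$ for $j\ge 2$, while, since $\projdim\bar S/J_{H_1\sqcup H_2}=p_1+p_2=n-3$ by Lemma \ref{Lemma: Betti numbers of disjoint graphs}, $\projdim M''=n-1$ with $\beta_{n-1,(n-1)+j}(M'')=\beta_{n-3,(n-3)+j}(\bar S/J_{H_1\sqcup H_2})$ for all $j$.

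Then I would run (\ref{longexact}) at $i=p$ in each internal degree $(n-1)+j$. For $j=1$, $\beta_{p,p+1}(S/J_G)=0$ by Lemma \ref{Lemma: beta_p,p+1 e beta_p,p+2}(i), since $\cone(v,H_1\sqcup H_2)$ is never complete. For $j\ge 3$, the $\Tor$ modules of $C$ flanking $T_{p,(n-1)+j}(S/J_G)$ and the $S/J_{K_n}$-summand of the middle term all vanish, so the sequence collapses to $\beta_{p,p+j}(S/J_G)=\beta_{n-1,(n-1)+j}(M'')=\beta_{n-3,(n-3)+j}(\bar S/J_{H_1\sqcup H_2})$. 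For $j=2$, the term $T_{n,n+1}(C)=K^{\,n-2}$ enters; one checks $T_{n,n+1}(S/J_{K_n})=T_{n,n+1}(M'')=0$ (both have projective dimension $<n$), so the connecting map is injective, and $T_{n-1,n+1}(C)=0$, whence the exact sequence $0\to K^{\,n-2}\to T_{p,p+2}(S/J_G)\to K^{\,\beta_{n-3,n-1}(\bar S/J_{H_1\sqcup H_2})}\to 0$, i.e. $\beta_{p,p+2}(S/J_G)=(n-2)+\beta_{n-3,n-1}(\bar S/J_{H_1\sqcup H_2})$.

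Now I would assemble the pieces. Using $\beta_{p,p}(S/J_G)=\beta_{p,p+1}(S/J_G)=0$, the three computations give $\CMtype(S/J_G)=(n-2)+\sum_{j\ge2}\beta_{n-3,(n-3)+j}(\bar S/J_{H_1\sqcup H_2})$. As each $J_{H_i}$ is a nonzero ideal generated in degree $2$, the $j=0$ and $j=1$ terms of $\beta_{n-3}(\bar S/J_{H_1\sqcup H_2})$ vanish (the latter counts, via (\ref{eq: linear strand}), cliques of $H_1\sqcup H_2$ of size $n-2$, of which there are none), so Lemma \ref{Lemma: Betti numbers of disjoint graphs}(i) turns the sum into $\CMtype(S_1/J_{H_1})\CMtype(S_2/J_{H_2})$, proving the first formula. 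For the extremal Betti number, recall $r=\reg S/J_G=\max\{r_1+r_2,2\}$ by Lemma \ref{sum reg} and that $\beta_{p,p+r}(S/J_G)$ is the unique extremal one. If $r>2$, then $r=r_1+r_2\ge 3$, so by the $j=r$ identity and Lemma \ref{Lemma: Betti numbers of disjoint graphs}(ii), $\beta_{p,p+r}(S/J_G)=\beta_{n-3,(n-3)+r}(\bar S/J_{H_1\sqcup H_2})=\Ebeta(S_1/J_{H_1})\Ebeta(S_2/J_{H_2})$; moreover, expanding $\beta_{n-3,n-1}(\bar S/J_{H_1\sqcup H_2})$ by the disjoint-union formula from the proof of Lemma \ref{Lemma: beta_p,p+1 e beta_p,p+2}(ii), each of its summands vanishes (the cross-term $\beta_{p_1,p_1+1}(S_1/J_{H_1})\beta_{p_2,p_2+1}(S_2/J_{H_2})$ would, by Lemma \ref{Lemma: beta_p,p+1 e beta_p,p+2}(i), force both $H_i$ complete, hence $r=2$; the other two summands carry a factor $\beta_{p_i,p_i}(S_i/J_{H_i})=0$), so $\beta_{p,p+2}(S/J_G)=n-2$. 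If $r=2$, then $r_1=r_2=1$ and that same formula gives $\beta_{n-3,n-1}(\bar S/J_{H_1\sqcup H_2})=\Ebeta(S_1/J_{H_1})\Ebeta(S_2/J_{H_2})$, hence $\Ebeta(S/J_G)=\beta_{p,p+2}(S/J_G)=n-2+\Ebeta(S_1/J_{H_1})\Ebeta(S_2/J_{H_2})$.

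The main obstacle is the bookkeeping for $C=S/((x_v,y_v)+J_{K_{n-1}})$ --- establishing $\beta_{n,n+1}(C)=n-2$ together with the vanishing of the adjacent $\Tor$ modules of $C$ in the relevant degrees, and then confirming that the connecting homomorphism in the $j=2$ strand is injective --- since this is precisely where the extra summand $n-2$ is produced and where one must rule out any further contribution to $\beta_{p,p+2}(S/J_G)$ beyond that of $M''$.
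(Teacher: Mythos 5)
Your argument is essentially the paper's: the same short exact sequence with $u=v$, the same identifications $G'=K_n$, $G''=H_1\sqcup H_2$, $H=K_{n-1}$, and the same reduction via Lemmas \ref{Lemma: beta_p,p+1 e beta_p,p+2} and \ref{Lemma: Betti numbers of disjoint graphs}. You are in fact more careful than the paper at the two delicate points: you justify $\beta_{n,n+1}(S/((x_v,y_v)+J_{K_{n-1}}))=n-2$ via the Koszul-complex description, and you verify that the connecting homomorphism in the $j=2$ strand is injective (by checking that $T_{p+1,p+2}$ of the middle term vanishes for degree reasons), which the paper asserts without comment. All of that is correct.

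There is one point to flag, and it is a gap relative to the statement as written rather than relative to the paper's proof. Your step ``as each $J_{H_i}$ is a nonzero ideal generated in degree $2$, the $j=0$ and $j=1$ terms of $\beta_{n-3}(\bar S/J_{H_1\sqcup H_2})$ vanish'' quietly adds the hypothesis that neither $H_i$ is an isolated vertex. The proposition does not exclude that case, and the paper later applies it with $H_2=\{w\}$ a single vertex (Proposition \ref{Prop: CM-type pure fan graph}). When one $H_i$ is an isolated vertex, $\beta_{p_i,p_i}(S_i/J_{H_i})=\beta_{0,0}=1\neq 0$, so if the other $H_j$ is complete the term $\beta_{n-3,n-2}(\bar S/J_{H_1\sqcup H_2})=\beta_{p_j,p_j+1}(S_j/J_{H_j})\neq 0$ survives and the identity $\sum_{j\geq 2}\beta_{n-3,(n-3)+j}=\CMtype(S_1/J_{H_1})\CMtype(S_2/J_{H_2})$ fails. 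This is not a defect you could have repaired: the stated formula is actually false there. For instance, $G=\cone(v,K_2\sqcup\{w\})$ on $n=4$ vertices is $K_3$ with a pendant edge, which is decomposable, so $\CMtype(S/J_G)=\beta_{2}(S/J_{K_3})\beta_1(S/J_{K_2})=2$, whereas the formula predicts $n-2+1\cdot 1=3$; similarly $G=P_3$ gives $1$ versus the predicted $2$. The paper's proof has exactly the same unacknowledged gap (in the line $\sum_{j=2}^r\beta_{p-2,p-2+j}(S/J_{G''})=\CMtype(S/J_{G''})$), so your explicit assumption is, if anything, a service: it isolates the condition (each $H_i$ has at least one edge) under which the proposition, and both proofs, are correct.
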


\begin{proof}
 Consider the short exact sequence (\ref{Exact}), with $u=v$, then we have $G' = K_{n}$, $G'' = H_1 \sqcup H_2$, and $H=  K_{n-1}$. It holds
\begin{align}
r=\reg S/J_G & \ = \ \max \{\reg S/J_{H_1} + \reg S/J_{H_2}, 2\}, \notag\\
\reg S/((x_u,y_u)+J_{G''}) & \ = \ \reg S/J_{H_1} + \reg S/J_{H_2},\notag \\
\reg S/J_{G'}  & \ = \ \reg S/((x_u,y_u)+J_{H}) = 1, \label{reg1}
\end{align}
and
\[
p=\projdim S/J_G = \projdim S/J_{G'} = \projdim S/((x_u,y_u)+J_{G''}) = n-1,
\]
\[
\projdim S/((x_u,y_u)+J_{H}) = n.
\]
Consider the long exact sequence (\ref{longexact}) with $i=p$. By \eqref{reg1}, we have 
\[
\beta_{p,p+j}(S/J_{G'}) = \beta_{p,p+j}(S/((x_u, y_u)+J_{H})) = 0  \text{ for all } j \geq 2
\]
and 
\[\beta_{p+1,p+1+(j-1)}(S/((x_u,y_u)+J_H)) \neq \text{ only for } j=2.\]

\noindent 
By Lemma \ref{Lemma: beta_p,p+1 e beta_p,p+2} and Lemma \ref{Lemma: Betti numbers of disjoint graphs} (i), it follows that 
\begin{eqnarray*}
 \CMtype(S/J_G) &=& \sum_{j=0}^r \beta_{p,p+j} (S/J_G) = \sum_{j=2}^r \beta_{p,p+j} (S/J_G) \\
 &=& \beta_{p-1,p-2+2}(S/J_H) + \sum_{j=2}^r \beta_{p-2,p-2+j} (S/J_{G''})  \\
 &=& n-2 + \CMtype (S/J_{G''}) \\
 &=& n-2 + \CMtype(S/J_{H_1})\CMtype(S/J_{H_2}).
\end{eqnarray*}

If $r=2$, 
\begin{eqnarray*}
 \CMtype(S/J_G) &=&  \beta_{p,p+2} (S/J_G) \\
 &=& \beta_{p-1,p-2+2}(S/J_H) +  \beta_{p-2,p-2+2} (S/J_{G''})  \\
 &=& n-2 + \Ebeta (S_1/J_{H_1}) \Ebeta (S_2/J_{H_2}),
\end{eqnarray*}
where the last equality follows from Equation (\ref{eq on beta_p,p+2 disjoint graphs}).

If $r >2$, it means that $H_1$ and $H_2$ are not both complete graphs, and then, by Lemma \ref{Lemma: Betti numbers of disjoint graphs} (ii), $\beta_{p-2,p-2+2} (S/J_{G''}) =0$, then $\beta_{p,p+2} (S/J_G) =n-2$, and $\Ebeta(S/J_G) = \Ebeta(S_1/J_{H_1}) \Ebeta(S_2/J_{H_2})$.

\end{proof}

\section{Extremal Betti numbers of some classes of Cohen-Macaulay binomial edge ideals}\label{Sec: bipartite and fan graphs}
We are going to introduce the notation for the family of fan graphs first introduced in \cite{BMS}. 

Let $K_m$ be the complete graph on $[m]$ and $W=\{v_1,\dots,v_s\} \subseteq [m]$. Let $F_{m}^W$ be the graph obtained from $K_m$ by attaching, for every $i=1, \dots, s$, a complete graph $K_{h_i}$ to $K_m$ in such a way $V(K_m) \cap V(K_{h_i}) = \{v_1, \dots, v_i\}$, for some $h_i >i$. We say that the graph $F_m^W$ is obtained by adding a \textit{fan} to $K_m$ on the set $W$. If $h_i = i+1$ for all $i=1, \dots, s$, we say that $F_m^W$ is obtained by adding a \textit{pure fan} to $K_m$ on the set $W$.

Let $W = W_1 \sqcup \cdots \sqcup W_k$ be a non-trivial partition of a subset $W \subseteq [m]$. Let $F_m^{W,k}$ be the graph obtained from $K_m$ by adding a fan to $K_m$ on each set $W_i$, for $i=1, \dots, k$. The graph $F_m^{W,k}$ is called a $k$-fan of $K_m$ on the set $W$. If all the fans are pure, we called it a \textit{k-pure} fan graph of $K_m$ on $W$.

When $k=1$, we write $F_m^W$ instead of $F_m^{W,1}$. Consider the pure fan graph $F_m^W$ on $W=\{v_1, \dots, v_s\}$. We observe that $F_m^W = \cone(v_1, F_{m-1}^{W'} \sqcup \{w\})$, where $W' = W \setminus \{v_1\}$, $w$ is the leaf of $F_m^W$, $\{w,v_1\} \in E(F_m^W)$, and $F_{m-1}^{W'}$ is the pure graph of $K_{n-1}$ on $W'$. \\

Now, we recall the notation used in \cite{BMS} for a family of bipartite graphs. 

For every $m \geq 1$, let $F_m$ be the graph on the vertex set $[2m]$ and with edge set $E(F_m)= \{\{2i, 2j-1\} \mid i=1, \dots, m, j=i, \dots, m\}$. \\

In \cite{BMS}, they prove that if either $G = F_m$ or $G= F_m^{W,k}$, with $m \geq 2$, then $J_{G}$ is Cohen-Macaulay. The regularity of $S/J_G$ has been studied in \cite{JK}, and hold the following results.
\begin{proposition}[\cite{JK}]\label{prop: reg pure fan graph}
Let $G = F_m^{W,k}$ be the $k$-pure fan graph of $K_m$ on $W$, with $m \geq 2$. Then
\[
\reg S/J_G = k+1.
\]
\end{proposition}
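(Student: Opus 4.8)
\textbf{Proof proposal for Proposition~\ref{prop: reg pure fan graph}.}

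The plan is to induct on $k$, exploiting the recursive cone structure of pure fan graphs recorded just before the statement. For the base case $k=1$ we must show $\reg S/J_{F_m^W} = 2$ for any pure fan $F_m^W$ on a set $W = \{v_1,\dots,v_s\}$ with $s\geq 1$. Using $F_m^W = \cone(v_1, F_{m-1}^{W'} \sqcup \{w\})$ with $W' = W\setminus\{v_1\}$, we may apply Lemma~\ref{sum reg} with $s=2$ components, one of which is the isolated vertex $\{w\}$ (contributing regularity $0$) and the other the smaller pure fan $F_{m-1}^{W'}$. By induction on $m$ (or on $s$), $\reg S/J_{F_{m-1}^{W'}}$ is either $2$ (if $s-1\geq 1$) or $1$ (if $W'=\emptyset$, i.e.\ $F_{m-1}^{W'}=K_{m-1}$). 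In either case the formula of Lemma~\ref{sum reg} gives $\reg S/J_{F_m^W} = \max\{\reg S/J_{F_{m-1}^{W'}} + 0,\, 2\} = 2$, since $F_m^W$ is never complete when $s\geq 1$. This settles $k=1$.

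For the inductive step, suppose the claim holds for $(k-1)$-pure fan graphs, and let $G = F_m^{W,k}$ with $W = W_1 \sqcup \cdots \sqcup W_k$. The idea is to peel off one fan via a cone decomposition. Pick a fan on $W_k = \{v_1,\dots,v_s\}$ and let $v_1$ play the role of the cutpoint $u$ in Set-up~\ref{setup}; removing $v_1$ from $G$ disconnects the pendant part of that fan (an isolated vertex $w$, the leaf attached at $v_1$) from the rest, which is a $(k-1)$-pure fan on $K_m$ together with a reduced pure fan on $W_k\setminus\{v_1\}$ still attached. Concretely one wants $G = \cone(v_1, G_1 \sqcup \{w\})$ where $G_1$ is again a pure fan graph — specifically a $k$-fan with the $W_k$-fan shortened by one step. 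Then Lemma~\ref{sum reg} gives $\reg S/J_G = \max\{\reg S/J_{G_1}, 2\}$, and by the inductive hypothesis $\reg S/J_{G_1} = k+1 \geq 3$ (as long as $k\geq 2$), so $\reg S/J_G = k+1$, completing the induction.

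The main obstacle is verifying that the cone decomposition in the inductive step genuinely produces a smaller $k$-pure fan graph as the non-trivial component $G_1$ — in particular that $v_1$ is a cutpoint, that the component of $G - v_1$ not containing $w$ is exactly $F_m$ with a $k$-fan whose $k$-th block is $W_k \setminus \{v_1\}$ (a $(k-1)$-fan when $|W_k|=1$), and that $v_1$ is a free vertex of both pieces so the cone formula applies. This is essentially a bookkeeping check on which edges survive the removal of $v_1$ and on the clique structure at $v_1$, but it must be done carefully because the fans share the vertex $v_1$ with $K_m$ and with each other only through $W$. Once that structural identification is in place, the regularity computation is immediate from Lemma~\ref{sum reg} and induction; no Betti-number or Tor-sequence analysis is needed beyond what Lemma~\ref{sum reg} already encapsulates.
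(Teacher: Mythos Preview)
First, note that the paper does not give its own proof of this proposition: it is quoted from \cite{JK} and used as a black box. So there is no proof in the paper to compare against.

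That said, your proposed argument has a genuine gap in the inductive step. The identity $F_m^W = \cone(v_1, F_{m-1}^{W'}\sqcup\{w\})$ recorded in the paper is valid \emph{only} for a single pure fan ($k=1$): there the chosen vertex $v_1\in W$ lies in every attached clique $K_{h_i}$, hence is adjacent to every leaf of the fan as well as to every vertex of $K_m$. For $k\ge 2$ this fails. If $v_1\in W_k$, then $v_1\notin W_1,\dots,W_{k-1}$ (the $W_i$ are disjoint), so $v_1$ is \emph{not} adjacent to the extra vertices introduced by the fans on $W_1,\dots,W_{k-1}$. Consequently $F_m^{W,k}$ is not of the form $\cone(v_1, G_1\sqcup\{w\})$ for any $G_1$, and Lemma~\ref{sum reg} does not apply. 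What you flag as a ``bookkeeping check'' is therefore not a check at all: the structural claim is false, not merely unverified.

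Your base case $k=1$ is fine and is exactly the observation the paper records before the statement. To push the induction to $k\ge 2$ you would need to replace the cone lemma by the full short exact sequence \eqref{Exact} at the cutpoint $v_1$ (which $v_1$ genuinely is), and analyze $G'$, $G''$, $H$ directly --- this is precisely the method the paper uses in Proposition~\ref{CM-type Fan} for the extremal Betti number, and it is the route taken in \cite{JK} for the regularity.
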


\begin{proposition}[\cite{JK}]\label{prop: reg bip}
For every $m\geq 2$, $\reg S/J_{F_m} = 3$.
\end{proposition}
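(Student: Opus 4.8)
The plan is to prove the two inequalities $\reg S/J_{F_m}\le 3$ and $\reg S/J_{F_m}\ge 3$ separately, the second being immediate. For the lower bound, note that for every $m\ge 2$ the vertices $1,2,3,4$ span an induced path $1-2-3-4$ in $F_m$: the edges $\{1,2\},\{2,3\},\{3,4\}$ belong to $E(F_m)$ while $\{1,3\},\{2,4\},\{1,4\}$ do not. Now $P_4$ is precisely the $2$-pure-fan graph $F_2^{\{1,2\},2}$ ($K_2$ with one pendant vertex attached at each of its two vertices), so Proposition \ref{prop: reg pure fan graph} gives $\reg S/J_{P_4}=3$; combined with the monotonicity of regularity under induced subgraphs \cite[Corollary 2.2]{MM} this yields $\reg S/J_{F_m}\ge 3$.

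For the upper bound I would induct on $m$, the base case $m=2$ being the equality $\reg S/J_{F_2}=\reg S/J_{P_4}=3$ just recorded. Let $m\ge 3$ and take $u=2m-1$, which is a cutpoint of $F_m$ since deleting it makes $2m$ an isolated vertex. The regularity estimate attached to the short exact sequence (\ref{Exact}) reads
\[
\reg \frac{S}{J_{F_m}}\ \le\ \max\Bigl\{\reg \frac{S}{J_{G'}},\ \reg \frac{S}{(x_u,y_u)+J_{G''}},\ \reg \frac{S}{(x_u,y_u)+J_{H}}+1\Bigr\},
\]
with $G',G'',H$ as in Set-up \ref{setup}. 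One checks directly that $G''=F_m\setminus\{2m-1\}$ is the disjoint union of $F_{m-1}$ and the isolated vertex $2m$; since an isolated vertex, and the adjunction of the variables $x_u,y_u$ not occurring in $J_{G''}$, do not affect regularity, the middle term equals $\reg S/J_{F_{m-1}}=3$ by the inductive hypothesis. Hence it remains to show $\reg S/J_{G'}\le 2$ and $\reg S/J_{H}\le 2$.

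The key point is to identify $G'$ and $H$. Here $G'$ is $F_m$ with a clique adjoined on the set $\{2,4,\dots,2m\}$ of even vertices (the $F_m$-neighbourhood of $u$), and $H=G'\setminus\{2m-1\}$. In each of these two graphs the vertex $2$ is adjacent to every other vertex, so each is a cone over the graph obtained by deleting vertex $2$; and that graph is, in both cases, the disjoint union of an isolated vertex with a graph of exactly the same shape built from $F_{m-1}$ in place of $F_m$. Lemma \ref{sum reg} then gives, for each of the two families, $\reg(\text{graph on }F_m)=\max\{\reg(\text{graph on }F_{m-1}),\,2\}$, and a short auxiliary induction — whose base case $m=1$ has $F_1=K_2$, for which the corresponding graphs are $K_2$ (regularity $1$) and a single point (regularity $0$) — yields $\reg S/J_{G'}=\reg S/J_{H}=2$ for all $m\ge 2$. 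Substituting $2$, $3$ and $2+1$ into the displayed inequality gives $\reg S/J_{F_m}\le 3$, which together with the lower bound completes the induction.

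The homological input is light: only the regularity estimate coming from (\ref{Exact}), Lemma \ref{sum reg}, and the induced-subgraph bound of \cite{MM}. The step requiring genuine care, and the one I expect to be the main obstacle, is the purely combinatorial bookkeeping of the third paragraph — verifying that Set-up \ref{setup} produces exactly the two graphs described, that vertex $2$ is universal in each of them, and that deleting it reproduces the same two families over $F_{m-1}$ — together with keeping the small base cases straight.
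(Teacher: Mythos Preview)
Your argument is correct. Note, however, that the paper does not supply its own proof of this proposition: it is quoted from \cite{JK} and used as input. So strictly speaking there is no ``paper's proof'' to compare against.

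That said, the paper does (in the proof of Proposition~\ref{Prop: CM-type bip}) analyse exactly the same short exact sequence you use, with $u=2m-1$, and there it observes directly that $G'$ and $H$ are the \emph{pure fan graphs} $F_{m+1}^{W}$ and $F_m^{W}$ on the clique $\{u\}\cup\{2i\mid i=1,\dots,m\}$, with $|W|=m-1$. Granting Proposition~\ref{prop: reg pure fan graph}, this gives $\reg S/J_{G'}=\reg S/J_H=2$ in one stroke. Your route to the same conclusion---recognising that vertex $2$ is universal in both $G'$ and $H$, applying Lemma~\ref{sum reg}, and running an auxiliary induction on $m$---is perfectly valid and self-contained, but the pure-fan identification is shorter and avoids the nested induction. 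Either way, the upper bound $\reg S/J_{F_m}\le 3$ then follows from the displayed inequality exactly as you wrote, and your lower bound via the induced $P_4\cong F_2^{\{1,2\},2}$ and \cite[Corollary~2.2]{MM} is the natural one.
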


Observe that if $G = F_m^W$ is a pure fan graph, the regularity of $J_G$ is equal to 3 for any $m$ and $W\subseteq [m]$, then all of these graphs belong to the class of graphs studied by Madani and Kiani in \cite{MK1}.

Exploiting Proposition \ref{prop: cm-type cone}, we get hold a formula for the CM-type of any $G = F_m^W$ pure fan graph.

\begin{proposition}\label{Prop: CM-type pure fan graph}
Let $m \geq 2$, and $G = F_m^W$ a pure fan graph, with $|W| \geq 1$. Then
\begin{equation}\label{eq: CM-type pure fan graph}
\mathrm{CM-type}(S/J_G) = \Ebeta(S/J_G)=(m-1)|W|.
\end{equation}
\end{proposition}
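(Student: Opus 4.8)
The plan is to induct on $|W|=s$, using the cone decomposition $F_m^W=\cone(v_1,F_{m-1}^{W'}\sqcup\{w\})$ recorded in the paragraph preceding the statement, where $W'=W\setminus\{v_1\}$ has $s-1$ elements, $w$ is the leaf adjacent to $v_1$, and $F_{m-1}^{W'}$ is the pure fan graph of $K_{m-1}$ on $W'$. The base case $s=1$ is $F_m^W=\cone(v_1,K_{m-1}\sqcup\{w\})$, which is the pure fan graph obtained by gluing a single triangle-tail; here one checks directly (e.g. it is a block graph, or just apply Proposition~\ref{prop: cm-type cone} with $H_1=K_{m-1}$, $H_2=\{w\}$) that $\CMtype(S/J_G)=m-1$, matching $(m-1)\cdot 1$, and that $r=\reg S/J_G=2$ so the unique extremal Betti number equals the CM-type.

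The inductive step splits according to whether the regularity is $2$ or $>2$. Since $\reg S/J_{F_m^W}=\reg S/J_{F_m^{W,1}}=1+1=2$ exactly when $s=1$ (by Proposition~\ref{prop: reg pure fan graph} with $k=1$; note $\reg S/J_{F_{m-1}^{W'}}=3$ when $s-1\ge 1$), for $s\ge 2$ we are always in the case $r>2$. Then Proposition~\ref{prop: cm-type cone} applied to $G=\cone(v_1,F_{m-1}^{W'}\sqcup\{w\})$ gives
\[
\CMtype(S/J_G)=n-2+\CMtype(S_1/J_{F_{m-1}^{W'}})\cdot\CMtype(S_2/J_{\{w\}}),
\]
and also $\Ebeta(S/J_G)=\Ebeta(S_1/J_{F_{m-1}^{W'}})\Ebeta(S_2/J_{\{w\}})$ together with $\beta_{p,p+2}(S/J_G)=n-2$. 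Here $n=|V(F_m^W)|$: one computes $n=m+s$ (we add $s$ leaves $w_1,\dots,w_s$, one at each stage), so $n-2=m+s-2$. Since $\{w\}$ is a single vertex, $J_{\{w\}}=0$ and $\CMtype(S_2/J_{\{w\}})=\Ebeta(S_2/J_{\{w\}})=1$. Plugging in the inductive hypothesis $\CMtype(S_1/J_{F_{m-1}^{W'}})=\Ebeta(S_1/J_{F_{m-1}^{W'}})=((m-1)-1)|W'|=(m-2)(s-1)$ yields
\[
\CMtype(S/J_G)=(m+s-2)+(m-2)(s-1).
\]

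Now the remaining task is the arithmetic identity $(m+s-2)+(m-2)(s-1)=(m-1)s$, which expands on the right to $ms-s$ and on the left to $m+s-2+ms-m-2s+2=ms-s$ — so they agree. Finally, for the extremal Betti number when $s\ge 2$: we have $\Ebeta(S/J_G)=\Ebeta(S_1/J_{F_{m-1}^{W'}})\cdot 1=(m-2)(s-1)$ from Proposition~\ref{prop: cm-type cone}, but this does \emph{not} match $(m-1)s$, which signals that the extremal Betti number of $S/J_{F_m^W}$ is not simply inherited multiplicatively; instead, because $\reg S/J_{F_{m-1}^{W'}}=3$ is independent of $m$, the regularity does \emph{not} strictly increase at this step (it stays $3$ for all $s\ge 2$, consistent with the remark after Proposition~\ref{prop: reg bip}), so the hypothesis $r>2$ in Proposition~\ref{prop: cm-type cone} forces us to re-examine which case of that proposition truly applies. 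The subtle point — and the main obstacle — is that for a pure fan graph with $s\ge 2$ the regularity equals $3$ at \emph{every} stage, so the cone recursion does not produce a strictly growing regularity and one must argue that the whole top-degree part of the resolution of $S/J_G$ (degree $p+3$) comes entirely from the $\beta_{p-1,p-2+2}(S/J_H)$-type contribution plus the disjoint-union contribution, i.e. that $\beta_{p,p+3}(S/J_G)=\CMtype(S/J_G)$ and there is no separate $\beta_{p,p+2}$ term surviving; this reconciles $\Ebeta(S/J_G)=\CMtype(S/J_G)=(m-1)s$. Carrying that out carefully via the long exact sequence (\ref{longexact}) with $i=p$, tracking that $\reg(S/((x_u,y_u)+J_H))=1$ kills the $\beta_{p,p+j}$ and $\beta_{p+1,p+j}$ terms for $j\ge 3$, and that the only surviving contributions in degree $p+r$ are $\beta_{p-2,p-2+r}(S/J_{G''})$ from the disjoint union $F_{m-1}^{W'}\sqcup\{w\}$, will give $\Ebeta(S/J_G)=\Ebeta(S_1/J_{F_{m-1}^{W'}})\cdot\Ebeta(S_2/J_{\{w\}})$ \emph{only if} $r_1=r$; but here $r_1=\reg S/J_{F_{m-1}^{W'}}=3=r$, so in fact the formula $\Ebeta(S/J_G)=(m-2)(s-1)$ from the ``$r>2$'' branch is what Proposition~\ref{prop: cm-type cone} literally gives, and the discrepancy with $(m-1)s$ must be resolved by observing that $n-2=m+s-2$ extra generators of degree $p+2$ actually also contribute to the extremal degree when $r=3$ is not strictly larger — concretely, one shows directly that $\Ebeta(S/J_G)=\beta_{p,p+3}(S/J_G)$ receives both the $(m-2)(s-1)$ term and, via the syzygies linking the linear-strand end with the top of the resolution, the $m+s-2$ term, summing to $(m-1)s$; the cleanest way to nail this is to prove $\CMtype(S/J_G)=\Ebeta(S/J_G)$ for these graphs (because $\reg$ is constant $=3$ and all of $\beta_{p,p+j}$ for $j<3$ vanish by a separate argument that $\beta_{p,p+2}(S/J_{F_m^W})=0$ when $s\ge 2$), after which the CM-type computation above finishes the proof.
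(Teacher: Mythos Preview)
Your argument goes off the rails at the regularity step. You claim that $\reg S/J_{F_m^W}=2$ ``exactly when $s=1$'' and that for $s\ge 2$ the regularity jumps to $3$. This is false: $F_m^W=F_m^{W,1}$ is a $1$-pure fan graph for every $|W|\ge 1$, and Proposition~\ref{prop: reg pure fan graph} with $k=1$ gives $\reg S/J_{F_m^W}=k+1=2$ regardless of $|W|$. (The line you may be thinking of, that ``the regularity of $J_G$ is equal to $3$'', refers to the ideal, not the quotient; $\reg J_G=\reg S/J_G+1$.) Once this is corrected, you are always in the $r=2$ case of Proposition~\ref{prop: cm-type cone}, which directly yields
\[
\Ebeta(S/J_G)=\CMtype(S/J_G)=n-2+\Ebeta(S_1/J_{H_1})\Ebeta(S_2/J_{H_2}),
\]
and your induction (together with $n=m+s$ and $\Ebeta(S_2/J_{\{w\}})=1$) finishes the computation in one line: $(m+s-2)+(m-2)(s-1)=(m-1)s$. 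All of the second half of your write-up --- the attempt to reconcile the ``$r>2$'' branch with the CM-type, the appeal to unspecified syzygy links, the proposed separate vanishing of $\beta_{p,p+2}$ --- is chasing a contradiction that does not exist.

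Aside from this error, your overall scheme matches the paper's: the paper also uses the cone decomposition $F_m^W=\cone(v_1,F_{m-1}^{W'}\sqcup\{w\})$ and Proposition~\ref{prop: cm-type cone}. The paper phrases the induction on $m$ rather than on $|W|$, but since the cone step decreases both by one this is immaterial. The paper then simply observes that $G$ is not complete (so $\beta_{p,p+1}=0$) and $\reg S/J_G=2$, whence $\CMtype(S/J_G)=\beta_{p,p+2}(S/J_G)=\Ebeta(S/J_G)$.
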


\begin{proof}
We use induction on $m$. If $m=2$, $G$ is decomposable into $K_2$ and $K_3$, and it is straightforward to check that (\ref{eq: CM-type pure fan graph}) holds. If $m >2$ and supposing the thesis true for all the pure graphs of $K_{m-1}$, we have $G = \cone(v_1, H_1 \sqcup H_2)$, where $W=\{v_1, \dots, v_s\}$, $H_1=F_{m-1}^{W'}$ is the pure graph of $K_{m-1}$ on $W'$, with $W' = W \setminus \{v_1\}$, $w$ is the leaf of $G$, $\{w,v_1\} \in E(G)$, and $H_2=\{w\}$. By induction hypothesis $\CMtype(S/J_{H_1}) = (m-2)(|W|-1)$,  and $\CMtype(S/J_{H_2})=1$, then using Proposition \ref{prop: cm-type cone}, it follows 
\begin{equation*}
\begin{aligned}
\CMtype(S/J_G) &= |V(G)|-2 + \CMtype(S/J_{H_1})\CMtype(S/J_{H_2}) \\
&= (m+ |W|-2) + (m-2)(|W|-1) = (m-1)|W|.
\end{aligned}
\end{equation*}
Since $|W| \geq 1$, the graph $F_m^W$ is not a complete graph, then $\beta_{p,p+1}(S/J_G) =0$, where $p = \projdim S/J_{G}$. Due to $\reg S/J_G=2$, the $\CMtype(S/J_G)$ coincides with the unique extremal Betti number of $S/J_{G}$, that is $\beta_{p,p+2}$. 
\end{proof}

In the following result we provide the unique extremal Betti number of any $k$-pure fan graph.

\begin{proposition}\label{CM-type Fan}
Let $G = F_m^{W,k}$ be a $k$-pure fan graph, where $m \geq 2$ and $W = W_1 \sqcup \cdots \sqcup W_k \subseteq [m]$ is a non-trivial partition of $W$. Then 
\begin{equation} \label{eq: CM-type k-pure fan graph}
\Ebeta(S/J_G) = (m-1) \prod_{i=1}^k |W_i|.
\end{equation}
\end{proposition}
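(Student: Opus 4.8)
The plan is to proceed by induction on $k$, the number of fans, using Proposition \ref{CM-type Fan} for the case $k=1$, which is exactly Proposition \ref{Prop: CM-type pure fan graph}, as the base case. For the inductive step with $k \geq 2$, I would single out one part $W_k = \{v_1, \dots, v_s\}$ of the partition, with the fan attached along $W_k$ having a leaf $w$ adjacent precisely to $v_1$. The key structural observation is that $G = F_m^{W,k}$ can be written as a cone: $G = \cone(v_1, H_1 \sqcup H_2)$, where $H_2 = \{w\}$ is the single new leaf, and $H_1$ is the graph on the remaining vertices, which I expect to be itself a $k$-pure fan graph (or a $(k-1)$-pure fan graph together with the shortened $W_k$-fan on $W_k \setminus \{v_1\}$) of $K_{m-1}$ on the set $W \setminus \{v_1\}$. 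Checking carefully that removing $v_1$ and forming the cone decomposition yields exactly such a graph on $m-1$ base vertices, with partition $(W_1, \dots, W_{k-1}, W_k \setminus \{v_1\})$, is the first thing I would verify.

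Next I would compute the regularity. By Proposition \ref{prop: reg pure fan graph}, $\reg S/J_G = k+1$, and since $k \geq 2$ this is $> 2$. Hence Proposition \ref{prop: cm-type cone} applies in its "$r > 2$" branch, giving
\[
\Ebeta(S/J_G) = \Ebeta(S_1/J_{H_1}) \, \Ebeta(S_2/J_{H_2}).
\]
Here $\Ebeta(S_2/J_{H_2}) = 1$ since $H_2$ is a single vertex (so $J_{H_2} = 0$ and $S_2/J_{H_2}$ is a polynomial ring, with unique Betti number $\beta_{0,0} = 1$). For $H_1$, the inductive hypothesis — applied to the $(k-1)$-pure fan (when $W_k \setminus \{v_1\} = \emptyset$, i.e. $|W_k| = 1$) or to the $k$-pure fan on $m-1$ vertices with the modified partition (when $|W_k| \geq 2$) — gives $\Ebeta(S_1/J_{H_1}) = (m-2) \prod_{i=1}^{k-1} |W_i|$ in the first case, or $(m-2)\bigl(\prod_{i=1}^{k-1}|W_i|\bigr)(|W_k| - 1)$ in the second. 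Multiplying through, I would then need to reconcile this with the claimed formula $(m-1)\prod_{i=1}^k |W_i|$.

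The main obstacle is precisely this last reconciliation: the naive cone decomposition lowers $m$ by one and (in the $|W_k| \geq 2$ case) shrinks $|W_k|$ by one, so a single application of Proposition \ref{prop: cm-type cone} produces a factor $m-2$ where we want $m-1$. This suggests the induction must be organized more cleverly — most likely by first inducting on $|W_k|$ (repeatedly peeling leaves off the last fan until $|W_k| = 1$, while keeping $m$ fixed, which should multiply $\Ebeta$ by $|W_k|$ and track how the factor evolves), and only then reducing $k$. Concretely, when $|W_k| \geq 2$ and $W_k = \{v_1, \dots, v_s\}$, removing the leaf attached at $v_s$ gives $G = \cone(v_s, H_1' \sqcup \{w'\})$ where $H_1'$ is the $k$-pure fan on $m$ base vertices with $W_k$ replaced by $W_k \setminus \{v_s\}$; since $s \geq 2$ the regularity is still $k+1 > 2$, so $\Ebeta(S/J_G) = \Ebeta(S/J_{H_1'})$, and by induction on $|W_k|$ this equals $(m-1)(\prod_{i<k}|W_i|)(s-1)$ — which does \emph{not} match unless we are careful about which vertex we cone over. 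I would therefore expect the correct bookkeeping to require coning over $v_1$ (the vertex shared by \emph{all} the leaves of the $W_k$-fan in the pure case is $v_1$, adjacent to the length-one leaf) rather than $v_s$, and to reduce $m$ by one at each step while reducing $|W_k|$ by one simultaneously, so that after $|W_k| - 1$ steps one lands on a $(k-1)$-pure fan of $K_{m - |W_k| + 1}$ with partition $(W_1, \dots, W_{k-1})$; resolving exactly this reduction path, and verifying $(m - |W_k| + 1 - 1)\prod_{i=1}^{k-1}|W_i| \cdot |W_k|$ telescopes back to the claimed value — using that each cone step in the $r>2$ regime multiplies $\Ebeta$ by $1$ and does not introduce the additive $n-2$ term — is where the real work lies.
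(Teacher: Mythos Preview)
Your approach has a structural error that blocks the induction for $k \geq 2$: the graph $G = F_m^{W,k}$ is \emph{not} a cone over any vertex once $k \geq 2$. If $v_1 \in W_k$, then $v_1$ lies in the base $K_m$ and is adjacent to all of $[m]$ and to every leaf in the $k$-th fan, but it is \emph{not} adjacent to the leaves attached along $W_1, \dots, W_{k-1}$ (those leaves are joined only to vertices in their own $W_i$). Hence $G \neq \cone(v_1, H_1 \sqcup H_2)$, and Proposition~\ref{prop: cm-type cone} cannot be invoked. The ``reconciliation'' trouble you noticed at the end is a symptom of this: the arithmetic never balances because the decomposition you are plugging into is not valid.

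The paper instead works directly with the short exact sequence~(\ref{Exact}) at a cutpoint $v \in W_1$ (assuming $|W_1| \geq 2$; the case where every $|W_i|=1$ is handled separately by decomposability). The key point is that $G'$ is \emph{not} the complete graph here: connecting all neighbours of $v$ merges the $\ell_1$ leaves of the first fan into the base clique, producing the $(k-1)$-pure fan $F_{m+\ell_1}^{W',k-1}$ on $W' = W_2 \sqcup \cdots \sqcup W_k$, while $G'' = F_{m-1}^{W'',k} \sqcup \{w\}$ and $H = F_{m+\ell_1-1}^{W',k-1}$. Because $\reg S/J_{G'} = \reg S/((x_u,y_u)+J_H) = k$ is exactly one less than $r = k+1$, the long exact sequence~(\ref{longexact}) at $(i,j)=(p,r)$ yields
\[
\Ebeta(S/J_G) = \Ebeta(S/J_H) + \Ebeta(S/J_{F_{m-1}^{W'',k}}),
\]
with \emph{both} terms surviving. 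Induction on $n = |V(G)|$ then gives $(m+\ell_1-2)\prod_{s\geq 2}\ell_s + (m-2)(\ell_1-1)\prod_{s\geq 2}\ell_s = (m-1)\prod_i \ell_i$. The two-term contribution from $H$ and $G''$ is exactly what fixes the ``$m-2$ versus $m-1$'' discrepancy you ran into.
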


\begin{proof}
Let $|W_i| = \ell_i$, for $i=1, \dots k$. First of all, we observe that if $\ell_i = 1$ for all $i=1,\dots,k$, that is $W_i = \{v_i\}$, then $G$ is decomposable into $G_1 \cup \cdots \cup  G_{k+1}$, where $G_1=K_m$, $G_j=K_2$ and $G_1 \cap G_j = \{v_j\}$, for all $j=2, \dots, k+1$. This implies 
\[
\Ebeta(S/J_G) = \prod_{j=1}^{k+1} \Ebeta(S/J_{G_j}) = m-1
\]
where the last equality is due to the fact $\Ebeta(S/J_{K_m}) = m-1$ for any complete graph $K_m$, with $m \geq 2$. Without loss of generality, we suppose $\ell_1 \geq 2$. \\
We are ready to prove the statement on induction on $n$, the number of vertices of $G=F_m^{W,k}$, that is $n=m+\sum_{i=1}^k \ell_i$. Let $n=4$, then $G$ is a pure fan graph $F_2^W$, with $|W|=2$, satisfying Proposition \ref{Prop: CM-type pure fan graph} and it holds \eqref{eq: CM-type pure fan graph}. Let $n>4$. Pick $v \in W_1$ such that $\{v,w\} \in E(G)$, with $w$ a leaf of $G$. Consider the short exact sequence (\ref{Exact}), with $u=v$, $G' = F_{m+\ell_1}^{W',k-1}$ the $(k-1)$-pure fan graph of $K_{m+\ell_1}$ on $W' = W_2 \sqcup \cdots \sqcup W_k$, $G'' = F_{m-1}^{W'',k} \sqcup \{w\}$ the disjoint union of the isolated vertex $w$ and the $k$-pure fan graph of $K_{m-1}$ on $W'' = W \setminus \{v\}$, and $H= F_{m+\ell_1-1}^{W',k-1}$. For the quotient rings involved in (\ref{Exact}), from Proposition \ref{prop: reg pure fan graph}, we have
\begin{align*}
r = &\reg S/J_G \ =  \reg S/((x_u,y_u)+J_{G''}) = 1 + k, \\
&\reg S/J_{G'} = \reg S/((x_u,y_u)+J_{H}) =  k.
\end{align*}
As regard the projective dimensions, we have
\begin{align*}
p &= \projdim S/J_G = \projdim S/J_{G'}  = \projdim S/((x_u,y_u)+J_{G''}) \\
&= \projdim S/((x_u,y_u)+J_{H})-1 = m + \sum_{i=1}^k \ell_i -1.
\end{align*}
Fix $i=p$ and $j=r$ in the long exact sequence (\ref{longexact}). The Tor modules $T_{p+1,p+1+(r-1)}(S/((x_u,y_u)+J_H))$ and $T_{p,p+r}(S/((x_u, y_u)+J_{G''}))$ are the only non-zeros. It follows 
\begin{align*}
\beta_{p,p+r}(S/J_G) &= \beta_{p-1,p+r-2}(S/J_{H}) + \beta_{p-2,p+r-2}(S/J_{G''})\\
&= \Ebeta(S/J_H) + \Ebeta(S/J_{F_{m-1}^{W'',k}}).
\end{align*}
Both $F_{m-1}^{W'',k}$ and $H$ fulfil the hypothesis of the proposition and they have less than $n$ vertices, then by induction hypothesis
\begin{align*}
\Ebeta(S/J_{H}) &= (m+\ell_1-2) \prod_{s=2}^k \ell_s, \\
\Ebeta(S/J_{F_{m-1}^{W'',k}}) &= (m-2) (\ell_1-1) \prod_{s=2}^k \ell_s.
\end{align*}
Adding these extremal Betti numbers, the thesis is proved. 
\end{proof}

\begin{proposition}\label{Prop: CM-type bip}
Let $m \geq 2$. The unique extremal Betti number of the bipartite graph $F_m$ is given by 
\[
\Ebeta(S/J_{F_m}) = \sum_{k=1}^{m-1} k^2.
\]
\end{proposition}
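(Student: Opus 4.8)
The plan is to argue by induction on $m$, applying the short exact sequence (\ref{Exact}) to $G = F_m$ at the cutpoint $u = 2$; note that $2$ is a cutpoint because vertex $1$ is a leaf of $F_m$ whose only neighbour is $2$. The first step is to identify the three auxiliary graphs of Set-up \ref{setup}. Deleting $2$ isolates the vertex $1$ and leaves, on $\{3,4,\dots,2m\}$, a relabelled copy of $F_{m-1}$, so $G'' = \{1\}\sqcup F_{m-1}$ and $J_{G''}=J_{F_{m-1}}$. The neighbourhood of $2$ in $F_m$ is the set of all odd vertices $\{1,3,5,\dots,2m-1\}$; turning it into a clique makes $\{2\}\cup\{1,3,\dots,2m-1\}$ a $K_{m+1}$, and inspecting the cliques that each remaining even vertex forms with consecutive odd vertices identifies $G'$ as the pure fan graph $F_{m+1}^{W'}$ on this $K_{m+1}$ with $W' = \{3,5,\dots,2m-1\}$, and $H = G'\setminus\{2\}$ as the pure fan graph $F_m^{W'}$ on $K_m = \{1,3,\dots,2m-1\}$ with the same $W'$; in both cases $|W'| = m-1$. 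Carrying out this combinatorial recognition of $G'$ and $H$ as pure fan graphs is the step I expect to be the main obstacle; everything that follows is bookkeeping.

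Next I would collect the data needed to run the long exact sequence (\ref{longexact}) at $i = p := \projdim S/J_{F_m} = 2m-1$ and $j = r := \reg S/J_{F_m} = 3$ (Proposition \ref{prop: reg bip}). Since $G'$ and $H$ are pure fan graphs, Proposition \ref{prop: reg pure fan graph} (with a single fan) gives $\reg S/J_{G'} = \reg S/J_H = 2$, and they are Cohen--Macaulay; hence $\projdim S/J_{G'} = 2m-1$, and $S/J_H$ has projective dimension $2m-2$ over its polynomial ring. The variables $x_2,y_2$ appear in neither $J_H$ nor $J_{G''}$, so they are a regular sequence modulo each; consequently $S/((x_2,y_2)+J_H)$ has regularity $2$ and projective dimension $2m$, and $S/((x_2,y_2)+J_{G''})$ has regularity $\reg S/J_{F_{m-1}}$ and projective dimension $2m-1$. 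Substituting into (\ref{longexact}) at $(p,p+3)$: the term $T_{p,p+3}(S/J_{G'})$ and the term $T_{p,p+3}(S/((x_2,y_2)+J_H))$ vanish because their regularities equal $2<3$, and the term $T_{p+1,p+3}(S/J_{G'})\oplus T_{p+1,p+3}(S/((x_2,y_2)+J_{G''}))$ entering from the left vanishes because both projective dimensions equal $p$. What remains is the short exact sequence of $K$-vector spaces
\[
0 \to T_{p+1,p+3}(S/((x_2,y_2)+J_H)) \to T_{p,p+3}(S/J_{F_m}) \to T_{p,p+3}(S/((x_2,y_2)+J_{G''})) \to 0 ,
\]
so $\beta_{p,p+3}(S/J_{F_m}) = \beta_{p+1,p+3}(S/((x_2,y_2)+J_H)) + \beta_{p,p+3}(S/((x_2,y_2)+J_{G''}))$.

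Finally I would evaluate the two terms on the right. Adjoining a regular sequence of linear forms shifts the minimal free resolution homologically without changing the value of an extremal Betti number, so by Proposition \ref{Prop: CM-type pure fan graph} one gets $\beta_{p+1,p+3}(S/((x_2,y_2)+J_H)) = \Ebeta(S/J_H) = (m-1)|W'| = (m-1)^2$, and likewise $\beta_{p,p+3}(S/((x_2,y_2)+J_{G''})) = \beta_{2m-3,\,2m}(S/J_{F_{m-1}})$. For $m\geq 3$ the latter is the unique extremal Betti number $\Ebeta(S/J_{F_{m-1}}) = \sum_{k=1}^{m-2}k^2$ by the induction hypothesis, and for $m = 2$ it equals $0$ since $F_1 = K_2$ has regularity $1$; in either case it is $\sum_{k=1}^{m-2}k^2$. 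Adding the two contributions, $\beta_{p,p+3}(S/J_{F_m}) = (m-1)^2 + \sum_{k=1}^{m-2}k^2 = \sum_{k=1}^{m-1}k^2$, which establishes both the base case $m=2$ and the inductive step; since $J_{F_m}$ is Cohen--Macaulay and connected, this $\beta_{p,p+r}$ is its unique extremal Betti number.
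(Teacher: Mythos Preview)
Your argument is correct and essentially identical to the paper's: both proceed by induction on $m$, apply the short exact sequence at a cutpoint of $F_m$, recognize $G'$ and $H$ as pure fan graphs (so that Propositions \ref{prop: reg pure fan graph} and \ref{Prop: CM-type pure fan graph} apply) and $G''$ as $F_{m-1}$ together with an isolated vertex, and then read off $\Ebeta(S/J_{F_m}) = (m-1)^2 + \Ebeta(S/J_{F_{m-1}})$ from the long exact sequence. The only difference is the choice of cutpoint: the paper takes $u = 2m-1$, while you take $u = 2$; these are interchanged by the symmetry $i \mapsto 2m+1-i$ of $F_m$, so the two computations are mirror images of each other.
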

\begin{proof}
We use induction on $m$. If $m=2$, then $F_2 = K_2$ and it is well known that 
$\Ebeta(S/J_{F_m})=1$.  
Suppose $m >2$. Consider the short exact sequence (\ref{Exact}), with $G=F_m$ and $u= 2m-1$, with respect to the labelling introduced at the begin of this section. The graphs involved in (\ref{Exact}) are $G' = F_{m+1}^W$, that is the pure fan graph of $K_{m+1}$, with $V(K_{m+1}) = \{u\} \cup \{2i | i=1,\dots,m\}$, on $W=\{2i-1 | i= 1, \dots, m-1\}$, $G'' = F_{m-1} \sqcup \{2m\}$, and the pure fan graph $H=F_m^W$. By Proposition \ref{prop: reg pure fan graph} and Proposition \ref{prop: reg bip}, we have
\begin{align*}
r = &\reg S/J_G = \reg S/((x_u,y_u)+J_{G''}) = 3 \\
&\reg S/J_{G'} = \reg S/((x_u,y_u)+J_{H}) = 2.
\end{align*}
As regards the projective dimension of the quotient rings involved in (\ref{Exact}), it is equal to $p = 2m-1$ for all, except for $S/((x_u,y_u)+J_{H})$ whose projective dimension is $2m$.
Consider the long exact sequence (\ref{longexact}), with $i=p$ and $j=r$.
In view of the above, $T_{p,p+r}(S/J_{G'})$, $T_{p,p+r}(S/((x_u,y_u)+J_H))$, and all the Tor modules on the left of $T_{p+1,p+1+(r-1)}(S/((x_u,y_u)+J_H))$ in (\ref{longexact})  are zero. It follows that 
\[
T_{p,p+r}(S/J_G) \cong T_{p+1,p+1+(r-1)}(S/((x_u,y_u)+J_H)) \oplus T_{p,p+r}(S/((x_u, y_u)+J_{G''})).
\]
Then, using Proposition \ref{CM-type Fan} and induction hypothesis, we obtain 
\begin{eqnarray*}
\beta_{p,p+r}(S/J_G) &=& \beta_{p-1,p+r-2}(S/J_H) + \beta_{p-2,p+r-2}(S/J_{G''})\\ &=& \Ebeta(S/J_H) + \Ebeta(S/J_{G''})\\
&=&(m-1)^2 + \sum_{k=1}^{m-2} k^2 = \sum_{k=1}^{m-1} k^2.
\end{eqnarray*}

\end{proof}

\begin{question}\label{Rem: extremal betti = CM-type}
Based on explicit calculations we believe that for all bipartite graphs $F_m$ and pure fan graphs $F_m^{W,k}$ the unique extremal Betti number coincides with the CM-type, that is $\beta_{p,p+j}(S/J_G) = 0$ for all $j=0, \dots, r-1$, when either $G= F_m$ or $G= F_m^{W,k}$, for $m \geq 2$, and $p= \projdim S/J_G$ and $r= \reg S/J_G$.
\end{question}
\ \\

In the last part of this section, we completely describe the Hilbert-Poincaré series $\HS$ of $S/J_G$, when $G$ is a bipartite graph $F_m$. In particular, we are interested in computing the $h$-vector of $S/J_G$. 

For any graph $G$ on $[n]$, it is well known that 
 \[
 \HS_{S/J_G} (t) = \frac{p(t)}{(1-t)^{2n}} = \frac{h(t)}{(1-t)^d}
 \]
 where $p(t), h(t) \in \ZZ[t]$ and $d$ is the Krull dimension of $S/J_G$. The polynomial $p(t)$ is related to the graded Betti numbers of $S/J_G$ in the following way
 \begin{equation}\label{eq: p(t) with Betti number}
 p(t) = \sum_{i,j}(-1)^i\beta_{i,j}(S/J_G)t^j.
 \end{equation}

\begin{lemma}\label{lemma: the last non negative entry}
Let $G$ be a graph on $[n]$, and suppose $S/J_G$ has an unique extremal Betti number, then the last non negative entry in the $h$-vector is $(-1)^{p+d}\beta_{p,p+r}$, where $p= \projdim S/J_G$ and $r= \reg S/J_G$.
\end{lemma}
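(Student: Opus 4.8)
The plan is to read off the top-degree behaviour of the numerator polynomial $p(t)$ directly from the graded Betti numbers, and then to divide by the appropriate power of $(1-t)$ to extract the leading coefficient of the $h$-polynomial.

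First I would pin down the degree and leading coefficient of $p(t)$. By Equation (\ref{eq: p(t) with Betti number}) we have $p(t)=\sum_{i,j}(-1)^i\beta_{i,j}(S/J_G)t^j$. Since $r=\reg S/J_G$, the regularity bound gives $\beta_{i,j}(S/J_G)=0$ whenever $j>i+r$; combined with $i\le p=\projdim S/J_G$ this forces $\beta_{i,j}=0$ for all $j>p+r$, so $\deg p(t)\le p+r$. The coefficient of $t^{p+r}$ in $p(t)$ is $\sum_i(-1)^i\beta_{i,p+r}$, and $\beta_{i,p+r}\ne 0$ requires simultaneously $i\le p$ and $p+r\le i+r$, hence $i=p$; so that coefficient is exactly $(-1)^p\beta_{p,p+r}$. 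The hypothesis that $S/J_G$ has a unique extremal Betti number is equivalent to $\beta_{p,p+r}(S/J_G)\ne 0$, so in fact $\deg p(t)=p+r$ with leading coefficient $(-1)^p\beta_{p,p+r}$, and no cancellation occurs.

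Next I would pass from $p(t)$ to $h(t)$. By the Hilbert--Serre description recalled before the statement, writing $\HS_{S/J_G}(t)=p(t)/(1-t)^{2n}=h(t)/(1-t)^d$ with $d=\dim S/J_G$ yields the polynomial identity $p(t)=(1-t)^{2n-d}h(t)$, and $d$ being precisely the order of the pole at $t=1$ gives $h(1)\ne 0$, i.e. $(1-t)\nmid h(t)$. Comparing leading terms of both sides (leading coefficients are multiplicative over $\ZZ[t]$), and noting that $(1-t)^{2n-d}$ has leading coefficient $(-1)^{2n-d}=(-1)^d$, we get $\deg h(t)=(p+r)-(2n-d)$ and the leading coefficient of $h(t)$ equal to $(-1)^p\beta_{p,p+r}/(-1)^d=(-1)^{p-d}\beta_{p,p+r}=(-1)^{p+d}\beta_{p,p+r}$. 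This leading coefficient is the last entry of the $h$-vector, and it is nonzero, as claimed.

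The argument is essentially formal; the only point requiring a little care is the combinatorial bookkeeping in the second step, namely verifying that $(-1)^p\beta_{p,p+r}t^{p+r}$ is the genuine top term of $p(t)$ with no interference from other Betti numbers — this is exactly where the non-vanishing of $\beta_{p,p+r}$ (equivalently, uniqueness of the extremal Betti number) enters. One should also be mindful of the index convention $\beta_{i,i+j}$ used in the paper when invoking the regularity bound, and record that $2n-d\ge 0$ so that the displayed identity is a bona fide polynomial relation.
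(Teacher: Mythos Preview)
Your argument is correct and follows exactly the paper's approach: identify the leading term of $p(t)$ as $(-1)^p\beta_{p,p+r}t^{p+r}$ via the vanishing $\beta_{i,p+r}=0$ for $i<p$, then use the factorization $p(t)=(1-t)^{2n-d}h(t)$ to read off the leading coefficient of $h(t)$. You have merely supplied a few extra routine details (the bound $\deg p(t)\le p+r$, the remark that $h(1)\neq 0$, and the parity computation $(-1)^{2n-d}=(-1)^d$) that the paper leaves implicit.
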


\begin{proof}
If $S/J_G$ has an unique Betti number then it is equal to $\beta_{p,p+r}(S/J_G)$. Since $p(t)=h(t)(1-t)^{2n-d}$, then $\lc (p(t)) = (-1)^d \lc(h(t))$, where $\lc$ denotes the leading coefficient of a polynomial. By Equation (\ref{eq: p(t) with Betti number}), the leading coefficient of $p(t)$ is the coefficient of $t^j$ for $j = p+r$. Since $\beta_{i,p+r} = 0$ for all $i < p$, $\lc (p(t)) = (-1)^p \beta_{p,p+r}$, and the thesis follows.
\end{proof}

The degree of $\HS_{S/J_G}(t)$ as a rational function is called \textit{a-invariant}, denoted by $a(S/J_G)$, and it holds
 \[
 a(S/J_G) \leq \reg S/J_G - \depth S/J_G.
 \]
The equality holds if $G$ is Cohen-Macaulay. In this case, $\dim S/J_G = \depth S/J_G$, and then $\deg h(t) = \reg S/J_G$. 

\begin{proposition}
Let $G=F_m$, with $m \geq 2$, then the Hilbert-Poincaré series of $S/J_G$ is given by
\[
\HS_{S/J_G}(t) = \frac{h_0+h_1t+h_2t^2+ h_3t^3}{(1-t)^{2m+1}}
\]
where
\[
h_0 = 1, \qquad h_1= 2m-1, \qquad h_2 = \frac{3m^2-3m}{2}, \; \text{ and } \; h_3 = \sum_{k=1}^{m-1}k^2.
\]
\end{proposition}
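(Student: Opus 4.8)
The plan is to pin down the four coefficients $h_0,h_1,h_2,h_3$ one at a time, using only that $F_m$ is Cohen-Macaulay together with the already-established facts $\projdim S/J_{F_m}=2m-1$, $\reg S/J_{F_m}=3$ (Proposition \ref{prop: reg bip}) and $\Ebeta(S/J_{F_m})=\sum_{k=1}^{m-1}k^2$ (Proposition \ref{Prop: CM-type bip}). First I would record the numerics: since $J_{F_m}$ is Cohen-Macaulay and $F_m$ has $2m$ vertices, $p:=\projdim S/J_{F_m}=2m-1$, hence $d:=\dim S/J_{F_m}=\depth S/J_{F_m}=4m-p=2m+1$, which explains the exponent in the denominator. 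Moreover, by the discussion preceding the statement, Cohen-Macaulayness gives $\deg h(t)=\reg S/J_{F_m}=3$, so indeed $h(t)=h_0+h_1t+h_2t^2+h_3t^3$.

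For $h_0$ and $h_1$ I would compare the two expressions for the Hilbert function in degrees $0$ and $1$. Writing $\HS_{S/J_{F_m}}(t)=\sum_{\ell\geq 0}\big(\sum_i h_i\binom{\ell-i+d-1}{d-1}\big)t^\ell$ and matching with $\dim_K (S/J_{F_m})_\ell$: degree $0$ gives $h_0=1$; since $J_{F_m}$ is generated in degree $2$ we have $\dim_K (S/J_{F_m})_1=4m$, so $h_0 d+h_1=4m$, whence $h_1=4m-(2m+1)=2m-1$. The coefficient $h_3$ is then immediate from Lemma \ref{lemma: the last non negative entry}: since $S/J_{F_m}$ has a unique extremal Betti number and $\deg h(t)=3$, we get $h_3=(-1)^{p+d}\beta_{p,p+r}(S/J_{F_m})$ with $r=3$; here $p+d=(2m-1)+(2m+1)=4m$ is even, so $h_3=\beta_{p,p+r}(S/J_{F_m})=\sum_{k=1}^{m-1}k^2$ by Proposition \ref{Prop: CM-type bip}.

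It remains to determine $h_2$, and here too I would compare Hilbert functions, now in degree $2$. The degree-$2$ component of $J_{F_m}$ is spanned by the generating binomials $x_iy_j-x_jy_i$ with $\{i,j\}\in E(F_m)$, and these are $K$-linearly independent because distinct edges produce binomials with disjoint monomial supports; hence $\dim_K (J_{F_m})_2=|E(F_m)|=\sum_{i=1}^{m}(m-i+1)=\binom{m+1}{2}$ and $\dim_K (S/J_{F_m})_2=\binom{4m+1}{2}-\binom{m+1}{2}$. Equating this with the degree-$2$ coefficient $h_0\binom{d+1}{2}+h_1 d+h_2$ of the expanded series and substituting $h_0=1$, $h_1=2m-1$, $d=2m+1$ leaves a single linear equation whose solution is $h_2=\frac{3m^2-3m}{2}$. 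I do not anticipate any genuine obstacle: the argument is essentially bookkeeping, and the only steps deserving a line of justification are the equality $\dim_K(J_{F_m})_2=|E(F_m)|$ and the binomial-coefficient arithmetic in this last computation.
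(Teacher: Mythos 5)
Your proof is correct, and all the numerics check out (in particular $|E(F_m)|=\sum_{i=1}^m(m-i+1)=\tfrac{m(m+1)}{2}$ and the degree-$2$ bookkeeping do give $h_2=\tfrac{3m^2-3m}{2}$). The overall skeleton is the same as the paper's: $\deg h(t)=\reg S/J_{F_m}=3$ from Cohen--Macaulayness, $h_3$ from Lemma \ref{lemma: the last non negative entry} together with Proposition \ref{Prop: CM-type bip}, and $h_1,h_2$ from low-degree counting. The one genuine difference is how that counting is done: the paper passes to the initial ideal, writes $\ini(J_G)=I_\Delta$ for a simplicial complex $\Delta$ (implicitly using that the initial ideal is squarefree), and applies the $f$-vector-to-$h$-vector transformation with $f_0=4m$ and $f_1=\binom{4m}{2}-|E(F_m)|$; you instead compare $\dim_K(S/J_{F_m})_\ell$ for $\ell=0,1,2$ directly against the expansion of $h(t)/(1-t)^d$. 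Your route is slightly more self-contained, since it needs only the linear independence of the defining binomials in degree $2$ (which you justify correctly via disjoint monomial supports) rather than the Stanley--Reisner formalism; the paper's route is the standard combinatorial shortcut when one already knows the initial ideal is squarefree. Both yield the same linear equations and the same answer.
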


\begin{proof}
By Proposition \ref{prop: reg bip}, $\deg h(t) = \reg S/J_G = 3$. Let $\mathrm{in}(J_G) = I_{\Delta}$, for some simplicial complex $\Delta$, where $I_\Delta$ denotes the Stanley-Reisner ideal of $\Delta$. Let $f_i$ be the number of faces of $\Delta$ of dimension $i$ with the convention that
$f_{-1} = 1$. Then 
\begin{equation}\label{eq: h_k}
h_k = \sum_{i=0}^k (-1)^{k-i} \binom{d-i}{k-i} f_{i-1}.
\end{equation}
Exploiting the Equation (\ref{eq: h_k}) we get 
\[
h_1 = f_0 -d = 4m - (2m+1) = 2m-1
\]
To obtain $h_2$ we need first to compute $f_1$, that is the number of edges in $\Delta$: they are all the possible edges, except for those that appear in $(I_{\Delta})_2$, which are the number of edges in $G$. So
\[
f_1 = \binom{4m}{2} - \frac{m(m+1)}{2} = \frac{15m^2-5m}{2}.
\] 
And then we have 
\begin{eqnarray*}
h_2 = \binom{2m+1}{2} f_{-1} - \binom{2m}{1} f_{0} + \binom{2m-1}{0} f_{1} = \frac{3m^2-3m}{2}.
\end{eqnarray*}
By Lemma \ref{lemma: the last non negative entry}, and since $p=2m-1$ and $d=2m+1$, 
\[
h_3 = (-1)^{4m} \beta_{p,p+r}(S/J_G) = \sum_{k=1}^{m-1}k^2
\]
where the last equality follows from Proposition \ref{Prop: CM-type bip}.
\end{proof}

\section{Extremal Betti numbers of Cohen-Macaulay bipartite graphs}\label{Sec: Cohen-Macaulay bipartite graphs}

In \cite{BMS}, the authors prove that, if $G$ is connected and bipartite, then $J_G$ is Cohen-Macaulay if and only if $G$ can be obtained recursively by gluing a finite number of graphs of the form $F_m$ via two operations. Here, we recall the notation introduced in \cite{BMS} for the sake of completeness. \\

\noindent Operation $*$: For $i = 1, 2$, let $G_i$ be a graph with at least one leaf $f_i$. We denote by $G = (G_1, f_1) * (G_2, f_2)$ the graph G obtained by identifying $f_1$ and $f_2$.  \\

\noindent Operation $\circ$: For $i = 1,2$, let $G_i$ be a graph with at least one leaf $f_i$, $v_i$ its neighbour and assume $\deg_{G_i}(v_i) \geq 3$. We denote by  $G = (G_1, f_1) \circ (G_2, f_2)$ the graph G obtained by removing the leaves $f_1, f_2$ from $G_1$ and $G_2$ and by identifying $v_1$ and $v_2$. \\

In $G = (G_1, f_1) \circ (G_2, f_2)$, to refer to the vertex $v$ resulting from the identification of $v_1$ and $v_2$ we write $\{v\} = V(G_1) \cap V(G_2)$. 
For both operations, if it is not important to specify the vertices $f_i$ or it is clear from the context, we simply write $G_1 * G_2$ or $G_1 \circ G_2$. 

\begin{theorem}[\cite{BMS}]
Let $G=F_{m_1} \circ \cdots \circ F_{m_t} \circ F$, where $F$ denotes either $F_{m}$ or a $k$-pure fan graph $F_{m}^{W,k}$, with $t \geq 0$, $m \geq 3$, and $m_i \geq 3$ for all $i=1, \dots, t$. Then $J_G$ is Cohen-Macaulay.
\end{theorem}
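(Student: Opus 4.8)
The plan is to induct on $t$, the number of applications of the operation $\circ$. For $t = 0$ the graph $G$ is either $F_m$ or a $k$-pure fan graph $F_m^{W,k}$ with $m \ge 3$, and in both of these cases $J_G$ is Cohen-Macaulay by the results of \cite{BMS} recalled earlier. For $t \ge 1$ I would write $G = F_{m_1} \circ G_0$ with $G_0 = F_{m_2} \circ \cdots \circ F_{m_t} \circ F$, a graph of exactly the same shape but with one fewer block, so that $J_{G_0}$ is Cohen-Macaulay by the inductive hypothesis. I will use that a connected graph $G$ on $[n]$ has $\dim S/J_G = n+1$ once $G$ is unmixed, so that for such $G$ Cohen-Macaulayness is equivalent to $\projdim S/J_G = n-1$, i.e. $\depth S/J_G = n+1$; granting that unmixedness of $G$ is preserved under $\circ$ (a purely combinatorial check on the cut sets, read off from the explicit descriptions of $F_{m_1}$ and $G_0$), the whole theorem reduces to the single inequality $\depth S/J_G \ge n+1$.

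To obtain this, I would apply the short exact sequence (\ref{Exact}) with $u = v$, where $v$ is the vertex created by identifying the two leaf-neighbours in the operation $F_{m_1} \circ G_0$; since every path between the $F_{m_1}$-part and the $G_0$-part of $G$ runs through $v$, this $v$ is a cutpoint and Set-up \ref{setup} applies. The crucial step is to identify the auxiliary graphs $G'$, $G''$, $H$. Here $G'' = G\setminus v$ is the disjoint union $(F_{m_1}\setminus\{f_1,v_1\}) \sqcup (G_0\setminus\{f_2,v_2\})$: the first summand is isomorphic to $F_{m_1-1}$ with $m_1 - 1 \ge 2$, hence has Cohen-Macaulay binomial edge ideal, and the second is again a graph assembled from the same building blocks, hence Cohen-Macaulay by induction; Lemma \ref{Lemma: Betti numbers of disjoint graphs} then pins down the Betti numbers, and in particular the depth, of $S/((x_u,y_u)+J_{G''})$. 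The graphs $G'$ and $H$, obtained by completing the neighbourhood of $v$ to a clique, should be recognized as cones over disjoint unions of the same blocks, so that Proposition \ref{prop: cm-type cone} and the inductive hypothesis give that $S/J_{G'}$ and $S/((x_u,y_u)+J_H)$ are Cohen-Macaulay as well. Plugging the resulting projective dimensions and regularities into the depth lemma for (\ref{Exact}) (equivalently, tracking which Tor modules vanish in the long exact sequence (\ref{longexact})) gives
\[
\depth S/J_G \;\ge\; \min\bigl\{\depth S/J_{G'},\ \depth S/((x_u,y_u)+J_{G''}),\ \depth S/((x_u,y_u)+J_{H}) + 1\bigr\} \;\ge\; n+1,
\]
which, together with unmixedness, yields that $J_G$ is Cohen-Macaulay.

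The step I expect to be the real obstacle is the combinatorial one: giving precise descriptions of $G'$, $H$, and $G_0\setminus\{f_2,v_2\}$ and checking that each stays inside a class to which the inductive hypothesis (or the cited base results together with Proposition \ref{prop: cm-type cone}) applies. Completing the neighbourhood of a cutpoint to a clique alters the graph substantially and in general leaves the bipartite setting, and when a block $F_{m_i}$ with $m_i = 3$ shrinks under deletion one drops below the hypothesis $m_i \ge 3$; so one will likely be forced either to prove a stronger statement --- enlarging the class of graphs so that it is closed under ``add a clique on the neighbourhood of a cutpoint'' and under the relevant deletions, and running the induction there --- or to bound the projective dimensions of the auxiliary modules directly from the cut-set (minimal primes) description rather than by recognizing the auxiliary graphs. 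A secondary, essentially bookkeeping, point is to confirm that the hypotheses $m, m_i \ge 3$ and $\deg v_i \ge 3$ are exactly what makes $v$ a cutpoint with the correct local structure and what forces unmixedness to be inherited at each step --- which is, of course, why the operations $*$ and $\circ$ were designed with these conditions in \cite{BMS}.
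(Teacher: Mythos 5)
A preliminary remark: this theorem is quoted from \cite{BMS}, and the paper contains no proof of it, so there is no internal argument to compare against; I evaluate your sketch on its own terms. Your skeleton --- induction, the short exact sequence (\ref{Exact}) at the identification vertex $v$, the depth lemma, and a separate unmixedness/dimension check --- is the right strategy, and it is essentially how \cite{BMS} proceed and how this paper later computes Betti numbers for the same graphs (Lemma \ref{Lemma: CM-type 2 pallini}, Theorem \ref{Theo: betti number t pallini}).

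The genuine gap is the load-bearing step in which you dispose of $G'$ and $H$: these graphs are \emph{not} cones, and Proposition \ref{prop: cm-type cone} does not apply to them. Passing from $G$ to $G'$ only turns $N_G(v)$ into a clique and adds no edges at $v$ itself, so $G'=\cone(v,H)$ would force $v$ to be adjacent to every other vertex of $G$, which fails as soon as the chain has more than one block. (Moreover, Proposition \ref{prop: cm-type cone} computes the CM-type of a cone \emph{assuming} Cohen-Macaulayness; the fact that such cones are Cohen-Macaulay is \cite[Theorem 3.8]{RR}.) The correct identification, visible in the proofs of Lemma \ref{Lemma: CM-type 2 pallini} and Theorem \ref{Theo: betti number t pallini}, is that when one cuts at the junction adjacent to the terminal factor $F$, the two blocks meeting at $v$ merge into a ($2$- or $k$-)pure fan graph, so that $G'$ and $H$ are again chains of the form $F_{m_1}\circ\cdots\circ F_{m_{t-1}}\circ F_{m'}^{W',k'}$. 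This is precisely why the theorem is stated for the class allowing $F$ to be a fan graph: that class, not the bipartite one, is closed under $G\mapsto G',H$, and the ``stronger statement'' you predict you would need is the theorem as stated, with the induction run on $t$ over the whole class (cutting at the first junction, as you propose, can produce a chain with fan graphs at both ends and so leaves even this class). What the sketch then still owes is the base case $t=0$ with $F=F_m^{W,k}$, which needs its own argument (for pure fans via $F_m^W=\cone(v_1,F_{m-1}^{W'}\sqcup\{w\})$ and \cite{RR}; for $k\ge 2$ via the exact sequence again), and the degenerate situations you correctly flag, e.g.\ $m_t=3$, where a block of $G''$ shrinks below the hypotheses and must be treated separately. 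As written, the proposal is a sound plan whose decisive combinatorial step is carried out incorrectly.
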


\begin{theorem}[\cite{BMS}, \cite{RR}]\label{Theo: bipartite CM}
Let $G$ be a connected bipartite graph. The following properties are equivalent:
\begin{enumerate}
\item[(i)] $J_G$ is Cohen-Macaulay;
\item[(ii)] $G = A_1 *A_2 * \cdots * A_k$, where, for $i=1, \dots, k$, either $A_i = F_m$ or $A_i = F_{m_1} \circ \cdots \circ F_{m_t}$, for some $m \geq 1$ and $m_j \geq 3$.
\end{enumerate}
\end{theorem}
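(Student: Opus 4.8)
Since the statement collects results of \cite{BMS} and \cite{RR}, I only outline the strategy; the two implications are handled separately, each by an induction driven by the short exact sequence (\ref{Exact}) and the behaviour of binomial edge ideals under the gluing operations $*$ and $\circ$.

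For (ii) $\Rightarrow$ (i) I would induct on the number $k$ of factors $A_i$. The base case amounts to showing that each admissible block has a Cohen--Macaulay binomial edge ideal. For $A_i = F_m$ this is recorded just before the theorem (\cite{BMS}); for $A_i = F_{m_1}\circ\cdots\circ F_{m_t}$ I would induct on $t$, applying (\ref{Exact}) with $u$ the cut vertex $v$ produced by the outermost $\circ$-gluing. The hypothesis $m_j\ge 3$ forces $\deg_G v\ge 3$, which is exactly what makes the graphs $G'$, $G''$, $H$ appearing in (\ref{Exact}) again of the type to which the cone criterion of \cite{RR} (Corollaries 3.6 and 3.7 and Theorem 3.8 there) applies; since all three have fewer vertices, by induction their binomial edge ideals are Cohen--Macaulay, and chasing the long exact sequence (\ref{longexact}) shows $S/J_{A_i}$ is Cohen--Macaulay. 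For the $*$-gluings one notes that identifying a leaf $f_1$ of $G_1$ with a leaf $f_2$ of $G_2$ yields $G = G_1\cup G_2$ with $V(G_1)\cap V(G_2)=\{v\}$ and $v$ a free vertex of each $G_i$ (a leaf belongs to a unique maximal clique); thus $G$ is decomposable, and by the behaviour of binomial edge ideals under decomposition (see \cite{RR}) $J_G$ is Cohen--Macaulay whenever $J_{G_1}$ and $J_{G_2}$ are. Iterating over $A_1,\dots,A_k$ proves the implication.

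For (i) $\Rightarrow$ (ii) --- the substantive direction --- I would induct on $|V(G)|$. Using that Cohen--Macaulayness forces $J_G$ to be unmixed, together with the combinatorial description of unmixedness of $J_G$ for bipartite $G$ (in terms of the cut-point sets of $G$ and the numbers of connected components they create), one first shows that a connected bipartite graph with $J_G$ Cohen--Macaulay must have a leaf $f$; let $v$ be its neighbour. One then distinguishes two cases according to $\deg_G v$. If $\deg_G v = 2$, one shows that $G$ is decomposable at a free vertex, writing $G = A * G_0$ with $A$ an admissible block and $G_0$ a connected bipartite graph on fewer vertices whose binomial edge ideal is again Cohen--Macaulay (by the decomposition result above, read in reverse), and applies the induction hypothesis to $G_0$. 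If $\deg_G v \ge 3$, one peels off the $F_m$-block sitting at $f$ --- equivalently, reverses a $\circ$-gluing --- and uses (\ref{Exact}) (with $u = f$, respectively $u = v$) to transfer Cohen--Macaulayness to the smaller bipartite graph(s) obtained, again closing the induction.

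The main obstacle is precisely the descent in (i) $\Rightarrow$ (ii): one must prove simultaneously that such a $G$ always admits a vertex at which it can be ``unglued'' in the prescribed way and that the Cohen--Macaulay property passes to the resulting smaller graphs. Both points rest on the unmixedness combinatorics of bipartite binomial edge ideals and a delicate case analysis of the local structure around a leaf, and this is the heart of \cite{BMS}; by contrast, the implication (ii) $\Rightarrow$ (i) is comparatively routine once the cone criterion of \cite{RR} and the decomposability behaviour of binomial edge ideals are in hand.
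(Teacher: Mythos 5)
This theorem is quoted from \cite{BMS} and \cite{RR}; the paper gives no proof of its own, so there is nothing internal to compare your argument against. Your outline is a fair reconstruction of the architecture of the cited proofs: you correctly identify (i) $\Rightarrow$ (ii) as the substantive direction, resting on the unmixedness combinatorics of bipartite binomial edge ideals and a case analysis around a leaf and its neighbour, and you correctly reduce the $*$-gluings to decomposability at a free vertex.

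There is, however, one concrete flaw in your sketch of (ii) $\Rightarrow$ (i): the induction you describe does not close within the class of graphs named in (ii). When you apply the sequence (\ref{Exact}) at the vertex $v$ coming from a $\circ$-gluing, the graphs $G'$ and $H$ are obtained by completing the neighbourhood of $v$; since $\deg v\ge 3$ this creates triangles, so $G'$ and $H$ are no longer bipartite and in particular are not of the form $F_{m_1}\circ\cdots\circ F_{m_t}$. Concretely they are graphs of the form $F_{m_1}\circ\cdots\circ F_{m_{t-1}}\circ F_{m'}^{W,k}$ with a $k$-pure fan graph as last factor --- this is visible in the proofs of Lemma \ref{Lemma: CM-type 2 pallini} and Theorem \ref{Theo: betti number t pallini}. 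The inductive class must therefore be enlarged from the outset to include the fan graphs, which is exactly why \cite{BMS} prove the Cohen--Macaulayness statement in the form recorded in the theorem immediately preceding this one, with the last factor allowed to be $F_m^{W,k}$. Relatedly, these fan graphs are not cones in general (only the $1$-pure fan $F_m^W$ is, as observed before Proposition \ref{prop: reg pure fan graph}), so the cone criterion of \cite{RR} does not dispose of $G'$ and $H$ directly; one has to control $\depth$ of all three outer terms of (\ref{Exact}) inductively and apply the Depth Lemma to that short exact sequence itself, rather than chase the Tor sequence (\ref{longexact}). With the inductive class so enlarged, your strategy matches the one in the literature.
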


Let $G=G_1* \cdots * G_t$, for $t \geq 1$. Observe that $G$ is decomposable into $G_1 \cup \cdots \cup G_t$, with $G_i \cap G_{i+1} = \{f_i\}$, for $i=1, \dots t-1$, where $f_i$ is the leaf of $G_i$ and $G_{i+1}$ which has been identified in $G_i *G_{i+1}$ and $G_i \cap G_j = \emptyset$, for $1\leq i <j \leq t$. If $G$ is a Cohen-Macaulay bipartite graph, then it admits only one extremal Betti number, and by \cite[Corollary 1.4]{HR}, it holds 
\[
\Ebeta(S/J_G) = \prod_{i=1}^t \Ebeta(S/J_{G_i}).
\]
In light of the above, we will focus on graphs of the form $G= F_{m_1} \circ \cdots \circ F_{m_t}$, with $m_i \geq 3$, $i=1, \dots, t$. Before stating the unique extremal Betti number of $S/J_G$, we recall the results on regularity showed in \cite{JK}.

\begin{proposition}[{\cite{JK}}]
For $m_1, m_2 \geq 3$, let $G = F_{m_1} \circ F$, where either $F = F_{m_2}$ or $F$ is a $k$-pure fan graph $F_{m_2}^{W,k}$, with $W = W_1 \sqcup \cdots \sqcup W_k$ and $\{v\}= V(F_{m_1}) \cap V(F)$. Then
\begin{equation*}
\reg S/J_G = 
\begin{cases}
6 \qquad \; \; \; \quad \text{ if } F = F_{m_2}\\
k+3 \qquad \text{ if } F= F_{m_2}^{W,k} \text{ and } |W_i| = 1 \text{ for all } i\\
k+4 \qquad \text{ if } F= F_{m_2}^{W,k} \text{ and } |W_i| \geq 2 \text{ for some } i \text{ and } v \in W_i\\
\end{cases}
\end{equation*}
\end{proposition}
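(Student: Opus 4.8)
The plan is to argue by induction on $|V(G)|$, applying the short exact sequence \eqref{Exact} (and the induced long exact sequence \eqref{longexact}) at the cutpoint $u=v$, i.e.\ the vertex produced by the $\circ$-identification. Since $N_G(v)$ is the disjoint union of $N_{F_{m_1}}(v)\setminus\{f_1\}$ and $N_F(v)\setminus\{f_2\}$, removing $v$ disconnects $G$, so that $G''=\bigl(F_{m_1}\setminus\{f_1,v\}\bigr)\sqcup\bigl(F\setminus\{f_2,v\}\bigr)$. The first step is to identify the two components. The graph $F_{m_1}\setminus\{f_1,v\}$ is, up to relabelling, $F_{m_1-1}$, and since $m_1-1\ge 2$, Proposition \ref{prop: reg bip} gives $\reg S/J_{F_{m_1}\setminus\{f_1,v\}}=3$. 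For the second component: $F\setminus\{f_2,v\}$ is $F_{m_2-1}$ when $F=F_{m_2}$; it is a $(k-1)$-pure fan graph of $K_{m_2-1}$ when $F=F_{m_2}^{W,k}$ with $|W_i|=1$ for all $i$; and it is a $k$-pure fan graph of $K_{m_2-1}$ when $v\in W_i$ with $|W_i|\ge 2$, since deleting the first vertex of the fan on $W_i$ (the one carrying the pendant leaf $f_2$) shortens that fan to a pure fan on $|W_i|-1$ vertices while leaving the remaining fans untouched. As the binomial edge ideal of a disjoint union lives in disjoint sets of variables, its minimal free resolution is a tensor product, so $\reg$ is additive over connected components; together with Propositions \ref{prop: reg bip} and \ref{prop: reg pure fan graph} this makes $\reg S/J_{G''}$ equal to $3+3=6$, to $3+k=k+3$, and to $3+(k+1)=k+4$ in the three respective cases.

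The lower bound then follows at once: $G''$ is an induced subgraph of $G$, so by \cite[Corollary 2.2]{MM} of Matsuda and Murai, $\reg S/J_G\ge\reg S/J_{G''}$, which is already the claimed value in each case. For the upper bound, \eqref{Exact} gives
\begin{align*}
\reg S/J_G & \ \leq\ \max\bigl\{\reg S/J_{G'},\ \reg S/J_{G''},\ \reg S/((x_v,y_v)+J_H)+1\bigr\}\\
& \ =\ \max\bigl\{\reg S/J_{G'},\ \reg S/J_{G''},\ \reg S/J_H+1\bigr\},
\end{align*}
where $H=G'\setminus v$, using that adjoining the regular sequence $x_v,y_v$ does not affect the regularity. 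It therefore remains to prove $\reg S/J_{G'}\leq\reg S/J_{G''}$ and $\reg S/J_H\leq\reg S/J_{G''}-1$. To do this I would identify $G'$ and $H$ explicitly: $G'$ is $G$ with all edges among $N_G(v)$ inserted, so that $\{v\}\cup N_G(v)$ becomes a clique and the two sides of $G$ get merged; exactly as in the proof of Proposition \ref{Prop: CM-type bip} --- where completing the neighbourhood of a cutpoint of $F_m$ turns it into a pure fan graph --- one checks that $G'$ and $H$ are again graphs whose binomial edge ideal is handled by Propositions \ref{prop: reg bip}, \ref{prop: reg pure fan graph}, or by the inductive hypothesis, and then the two inequalities can be read off case by case. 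The base cases $m_1=m_2=3$ are verified directly, or reduced via Theorem \ref{Theo: bipartite CM} to graphs $F_m$ and $F_m^{W,k}$ already covered.

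I expect the crux to be this last identification of $G'$ and $H$. In contrast to $G''$, these graphs arise by \emph{adding} edges, so neither is an induced subgraph of $G$ and Matsuda--Murai is of no help; one must recognise them explicitly inside the family of Cohen-Macaulay bipartite and fan graphs in order to control their regularities. This is precisely what makes the trichotomy in the statement unavoidable: "completing $N_G(v)$" yields structurally different graphs according to whether $v$ is a leaf-neighbour in a pure block, a leaf-neighbour in a copy of $K_{m_2}$ carrying several single-vertex fans, or a vertex lying in a fan $W_i$ with $|W_i|\ge 2$. As a final remark, even when $\reg S/J_H+1$ ties with $\reg S/J_{G''}$ in the maximum above, no cancellation in \eqref{longexact} can push $\reg S/J_G$ below $\reg S/J_{G''}$, because the Matsuda--Murai inequality already pins the regularity from below at that value; thus, unlike the extremal Betti number computations, the regularity statement requires no finer analysis of the long exact sequence.
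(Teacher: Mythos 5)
This proposition is quoted from \cite{JK}; the paper gives no proof of it, so there is nothing internal to compare against. Judged on its own, your strategy is sound and is exactly the one the paper deploys for the neighbouring results: apply the sequence (\ref{Exact}) at the $\circ$-vertex $v$, get the lower bound from Matsuda--Murai applied to the induced subgraph $G''$, and get the matching upper bound by recognising $G'$, $G''$ and $H$ inside the $F_m$/fan-graph families. Your identification of $G''$ and the resulting values $3+3$, $3+k$, $3+(k+1)$ are correct in all three cases (in particular you correctly note that $v$ must be the \emph{first} vertex of its $W_i$, since that is where the pendant leaf sits, and that deleting it shortens that fan to a pure fan on $|W_i|-1$ vertices). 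The identifications of $G'$ and $H$ that you defer are precisely the ones the paper records explicitly in the proof of Lemma \ref{Lemma: CM-type 2 pallini}: for $F=F_{m_2}$ one gets $G'=F_{m_1+m_2-1}^{W,2}$ and $H=F_{m_1+m_2-2}^{W,2}$ with $|W_i|=m_i-1$, and for $F=F_{m_2}^{W,k}$ one gets $k$-pure fan graphs $F_{m}^{W',k}$ and $F_{m-1}^{W',k}$ with $W_1$ replaced by a set of size $m_1-1$; their regularities ($3$, resp.\ $k+1$) then make your two required inequalities immediate. Two small quibbles: no induction on $|V(G)|$ is actually needed here, since $t=1$ and all of $G'$, $G''$, $H$ fall directly under Propositions \ref{prop: reg pure fan graph} and \ref{prop: reg bip}; and the phrase ``handled \ldots by the inductive hypothesis'' should be replaced by these direct identifications. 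With the case-by-case description of $G'$ and $H$ written out, your argument is complete.
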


\begin{proposition}[\cite{JK}]
Let $m_1, \dots, m_t , m\geq 3$ and $t \geq 2$. Consider $G= F_{m_1} \circ \cdots \circ F_{m_t} \circ F$, where $F$ denotes either $F_{m}$ or the $k$-pure fan graph $F_m^{W,k}$ with  $W = W_1 \sqcup \cdots \sqcup W_k$ and $|W_i| \geq 2$ for some $i$. Then 
\[
\reg S/J_G = \reg S/J_{F_{m_1-1}} + \reg S/J_{F_{m_2-2}} + \cdots + \reg S/J_{F_{m_t -2}} + \reg S/J_{F \setminus \{v,f\}}
\]
where $\{v\} = V( F_{m_1} \circ \cdots \circ F_{m_t}) \cap V(F)$, $v \in W_i$ and $f$ is a leaf such that $\{v,f\} \in E(F)$.
\end{proposition}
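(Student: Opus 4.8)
The plan is to argue by induction on $t$, splitting off the first summand $F_{m_1}$ at each step by means of the short exact sequence (\ref{Exact}) applied at the cutpoint $v$ at which $F_{m_1}$ is glued to the rest of the chain; here $\{v\}=V(F_{m_1})\cap V(F_{m_2}\circ\cdots\circ F_{m_t}\circ F)$, which is indeed a cutpoint of $G$.

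First I would carry out the combinatorial bookkeeping for Set-up \ref{setup} with $u=v$. Deleting $v$ disconnects $G$, and a direct check against the definition of $F_m$ shows that the component coming from $F_{m_1}$ is its induced subgraph on the vertices other than the removed leaf and $v$, which is $F_{m_1-1}$, while the other component is the graph obtained from $F_{m_2}\circ\cdots\circ F_{m_t}\circ F$ by deleting its outward leaf together with that leaf's neighbour; for $m_2\ge 4$ the latter is again a $\circ$-chain, namely $F_{m_2-1}\circ F_{m_3}\circ\cdots\circ F_{m_t}\circ F$, which inherits the hypotheses of the proposition. Since the regularity of the binomial edge ideal of a disjoint union equals the sum of the regularities of the components (as in the proof of Lemma \ref{sum reg}), and since the induction hypothesis — or, for $t=2$, the result of \cite{JK} for $\reg S/J_{F_{m_1}\circ F}$ — applied to this shorter chain yields exactly $\reg S/J_{F_{m_2-2}}+\cdots+\reg S/J_{F_{m_t-2}}+\reg S/J_{F\setminus\{v,f\}}$ (its leading link $F_{m_2-1}$ contributing $\reg S/J_{F_{(m_2-1)-1}}=\reg S/J_{F_{m_2-2}}$), I would conclude
\[
\reg\frac{S}{J_{G\setminus\{v\}}}=\reg\frac{S}{J_{F_{m_1-1}}}+\reg\frac{S}{J_{F_{m_2-2}}}+\cdots+\reg\frac{S}{J_{F_{m_t-2}}}+\reg\frac{S}{J_{F\setminus\{v,f\}}},
\]
the value claimed for $\reg S/J_G$.

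It then remains to prove $\reg S/J_G=\reg S/J_{G\setminus\{v\}}$. The inequality $\ge$ follows at once from Matsuda and Murai's \cite[Corollary 2.2]{MM}, since $G\setminus\{v\}$ is an induced subgraph of $G$ and adjoining and then killing the variables $x_v,y_v$ leaves the regularity unchanged. For $\le$ I would read off from the long exact sequence (\ref{longexact}) the standard bound
\[
\reg\frac{S}{J_G}\le\max\left\{\reg\frac{S}{J_{\widetilde G}},\ \reg\frac{S}{(x_v,y_v)+J_{G\setminus\{v\}}},\ \reg\frac{S}{(x_v,y_v)+J_{\widetilde G\setminus\{v\}}}+1\right\},
\]
where $\widetilde G$ denotes $G$ with the neighbourhood $N_G(v)$ completed to a clique, and then show that the first and third terms on the right do not exceed $\reg S/J_{G\setminus\{v\}}$. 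The point is that completing $N_G(v)$ makes $v$ a free vertex and fuses the part of $v$'s neighbourhood coming from $F_{m_1}$ with the part coming from the rest of the chain; combining the decomposition and additivity statements of Section \ref{Sec: preliminaries} with the regularity formulas already available — for fan graphs (Proposition \ref{prop: reg pure fan graph}), for $F_m$ (Proposition \ref{prop: reg bip}), and for shorter chains by induction — one bounds $\reg S/J_{\widetilde G}$ and $\reg S/J_{\widetilde G\setminus\{v\}}+1$ above by $\reg S/J_{G\setminus\{v\}}$, which closes the induction.

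The step I expect to be the main obstacle is precisely this upper bound: the auxiliary graphs $\widetilde G$ and $\widetilde G\setminus\{v\}$ are no longer $\circ$-chains of copies of $F_m$, so one needs a precise understanding of how adjoining the clique on the straddling set $N_G(v)$ reshapes the graph, and must then assemble this with component-wise additivity and the earlier formulas so that, in particular, the ``$+1$'' term does not overshoot. A secondary, more tedious difficulty is the boundary cases $m_i=3$: there the reductions that occur — such as $F_{m_i-1}=F_2$, which is a path and cannot serve as a $\circ$-link, or $F_{m_i-2}=F_1=K_2$ — leave the clean chain framework, so the decompositions and the induction must be arranged so that the structure theory of \cite{BMS} together with Proposition \ref{prop: reg bip} still delivers the asserted value.
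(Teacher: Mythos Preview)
The paper does not supply a proof of this proposition: it is quoted from \cite{JK} and stated without argument, so there is nothing in the paper to compare your proposal against.

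That said, a few remarks on your sketch. First, be careful with the symbol $v$: in the statement $v$ is the gluing vertex between $F_{m_t}$ and $F$, while your induction peels off $F_{m_1}$ at the \emph{other} end of the chain; you should use a different letter for your cutpoint to avoid overwriting the $v$ that appears in the target formula via $F\setminus\{v,f\}$. Second, the obstacles you flag are the real ones. In particular, after completing $N_G(u)$ to a clique the graph $\widetilde G$ becomes a $\circ$-chain whose last block is a pure fan rather than an $F_m$ (exactly as in the proof of Theorem \ref{Theo: betti number t pallini}, where $G'=F_{m_1}\circ\cdots\circ F_{m_{t-1}}\circ F_{m'}^{W',k}$); handling its regularity therefore requires the mixed-chain case of the very proposition you are proving, so the induction must be set up to cover chains ending in either $F_m$ or $F_m^{W,k}$ simultaneously. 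Third, the boundary $m_i=3$ genuinely changes the arithmetic: $\reg S/J_{F_1}=1$ rather than $3$, and this is precisely what produces the $r-1$ versus $r-3$ dichotomy exploited later in Theorem \ref{Theo: betti number t pallini}; your ``$+1$'' term will overshoot unless this is tracked explicitly. The argument in \cite{JK} is organised along the lines you describe, but the case analysis needed to close the upper bound is substantial and is not something one can wave through.
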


\begin{lemma}\label{Lemma: CM-type 2 pallini}
Let $m_1, m_2 \geq 3$ and $G = F_{m_1} \circ F$, where $F$ is either $F_{m_2}$ or a $k$-pure fan graph $F_{m_2}^{W,k}$, with $W = W_1 \sqcup \cdots \sqcup W_k$ and $|W_i| \geq 2$ for some $i$. Let $\{v\} = V(F_{m_1}) \cap V(F)$ and suppose $v \in  W_i$. Let $G''$ be as in Set-up \ref{setup}, with $u=v$. Then the unique extremal Betti number of $S/J_G$ is given by
\[
\Ebeta (S/J_G) = \Ebeta (S/J_{G''}).
\]
In particular,
\[
\Ebeta (S/J_G) = 
\begin{cases}
\Ebeta(S/J_{F_{m_1-1}})\Ebeta(S/J_{F_{m_2-1}}) \qquad \text{ if } F = F_{m_2} \\
\Ebeta(S/J_{F_{m_1-1}})\Ebeta(S/J_{F_{m_2-1}^{W',k}}) \qquad \text{ if } F = F_{m_2}^{W,k}
\end{cases}
\]
where $W' = W \setminus \{v\}$.
\end{lemma}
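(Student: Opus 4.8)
The plan is to argue exactly as in the proofs of Proposition~\ref{CM-type Fan} and Proposition~\ref{Prop: CM-type bip}: apply the short exact sequence~(\ref{Exact}) with the cutpoint $u=v$, identify the four graphs $G'$, $G''$, $H$ appearing there, read off their regularities and projective dimensions, and then chase the long exact sequence~(\ref{longexact}) in homological degree $i=p$ and internal degree $j=r$. First I would observe that since $v$ has a leaf neighbour $f$ in $F$ with $\{v,f\}\in E(F)$, the vertex $v$ is genuinely a cutpoint of $G$, so Set-up~\ref{setup} applies. With $u=v$ one has $G''= (F_{m_1}\setminus\{v\}) \sqcup (F\setminus\{v\})$; because $v$ is a free vertex of $F_{m_1}$ (it lies in the single maximal clique $K_{m_1}$ together with the attached structure — here I would use the precise combinatorics: $F_{m_1}\setminus v$ is again of the form $F_{m_1-1}$ up to relabelling, and $F\setminus v$ is $F_{m_2-1}$ or $F_{m_2-1}^{W',k}$ respectively). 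Similarly $G'$ is obtained by making the neighbourhood of $v$ a clique, and $H=G'\setminus v$; in both the pure-fan case and the $F_{m_2}$ case these turn out to be Cohen--Macaulay graphs of the same shape with strictly smaller regularity than $G$, and $\projdim$ one less than that of $(x_v,y_v)+J_H$.

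**The numerical input.**

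The key quantitative step is to record, using Proposition~\ref{prop: reg pure fan graph}, Proposition~\ref{prop: reg bip} and the two regularity propositions of~\cite{JK} quoted just above, that
\[
\reg S/J_G \;=\; \reg S/((x_v,y_v)+J_{G''}) \;>\; \reg S/J_{G'} \;=\; \reg S/((x_v,y_v)+J_H),
\]
with the last two equal to $r-1$ and the Cohen--Macaulay projective dimensions all equal to $p=\projdim S/J_G$ except $\projdim S/((x_v,y_v)+J_H)=p+1$. Granting this, in the long exact sequence~(\ref{longexact}) with $i=p$, $j=r$ the modules $T_{p,p+r}(S/J_{G'})$ and $T_{p,p+r}(S/((x_v,y_v)+J_H))$ vanish (internal degree too large), all Tor to the left of $T_{p+1,p+1+(r-1)}(S/((x_v,y_v)+J_H))$ vanish, and $T_{p+1,p+r}(S/((x_v,y_v)+J_H))=0$ as well since $p+1=\projdim$ of that module but its regularity is only $r-1<r$, so there is no room in internal degree $p+r$ at the very top homological spot. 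Hence the sequence degenerates to $T_{p,p+r}(S/J_G)\cong T_{p,p+r}(S/((x_v,y_v)+J_{G''}))$, i.e. $\Ebeta(S/J_G)=\beta_{p-2,p-2+r}(S/J_{G''})=\Ebeta(S/J_{G''})$, where I use that $S/((x_v,y_v)+J_{G''})$ has the resolution of $S/J_{G''}$ shifted by the Koszul complex on $x_v,y_v$, shifting homological degree by $2$. The ``in particular'' statement then follows from Lemma~\ref{Lemma: Betti numbers of disjoint graphs}(ii) (product of extremal Betti numbers over the two disjoint components) together with the identifications of the two components of $G''$ recorded above.

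**Main obstacle.**

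The delicate point is not the spectral/long-exact-sequence bookkeeping, which is routine once the numbers are in place, but verifying carefully that in every case the regularity of $S/J_{G'}$ (equivalently of $S/((x_v,y_v)+J_H)$) is exactly $r-1$ and strictly smaller than $\reg S/J_G$, and that $T_{p+1,p+r}(S/((x_v,y_v)+J_H))=0$. Here one must split into the subcases $F=F_{m_2}$ (where $r=6$ and $G'=F_{m_1+1}^{W}$-type of regularity... actually $G'$ becomes a fan glued configuration of regularity $5$, so one checks $6-1=5$) and $F=F_{m_2}^{W,k}$ with $v\in W_i$, $|W_i|\ge 2$ (where $r=k+4$ and $G'$ drops to regularity $k+3$), invoking the relevant $\cite{JK}$ formulae for each; a mild subtlety is that $H=G'\setminus v$ and $G'$ have the same regularity because removing $v$ from the now-complete neighbourhood only removes a free vertex of the resulting clique. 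I would also double-check the projective-dimension claim via $\projdim S/J_G=n-1$ for connected Cohen--Macaulay binomial edge ideals (recalled at the end of Section~\ref{Sec: preliminaries}) applied to each of $G$, $G'$, $G''$'s components, so that $\projdim S/((x_v,y_v)+J_H)=(n-2)+2=n=p+1$. Once these case-by-case regularity comparisons are confirmed, the conclusion drops out immediately from~(\ref{longexact}).
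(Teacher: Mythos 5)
Your overall strategy is exactly the paper's: apply the short exact sequence (\ref{Exact}) at $u=v$, identify $G'$, $G''$, $H$, and chase (\ref{longexact}) at $(i,j)=(p,r)$. However, there is a concrete error in the step you yourself flag as the delicate one, and as written it breaks the argument. You claim $\reg S/J_{G'}=\reg S/((x_v,y_v)+J_H)=r-1$ (i.e.\ $5$ when $F=F_{m_2}$, and $k+3$ when $F=F_{m_2}^{W,k}$). This is wrong: $G'$ and $H$ are $2$-pure (resp.\ $k$-pure) fan graphs, so by Proposition \ref{prop: reg pure fan graph} their regularity is $3$ (resp.\ $k+1$), which is $r-3$, not $r-1$. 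Worse, even granting your value $r-1$, your deduction that $T_{p+1,p+r}(S/((x_v,y_v)+J_H))=0$ is internally inconsistent: that module sits in internal shift $(p+1)+(r-1)$, and if $S/((x_v,y_v)+J_H)$ had projective dimension $p+1$ and regularity $r-1$, then $T_{p+1,(p+1)+(r-1)}$ would be precisely its (nonzero) unique extremal Betti number, not zero. In that scenario the long exact sequence would produce an extra summand $\Ebeta(S/J_H)$ in $\Ebeta(S/J_G)$ --- this is exactly the phenomenon that does occur in Theorem \ref{Theo: betti number t pallini} when $m_t=3$, where the regularity drop is only $1$. So the conclusion $\Ebeta(S/J_G)=\Ebeta(S/J_{G''})$ genuinely depends on the drop being at least $2$, and your stated numbers do not deliver that.

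The repair is straightforward and is what the paper does: compute $\reg S/J_{G'}=\reg S/((x_v,y_v)+J_H)=3$ (resp.\ $k+1$) versus $r=6$ (resp.\ $k+4$), so that $r-1>\reg S/((x_v,y_v)+J_H)$ and the term $T_{p+1,(p+1)+(r-1)}(S/((x_v,y_v)+J_H))$ really does vanish, yielding the isomorphism $T_{p,p+r}(S/J_G)\cong T_{p,p+r}(S/((x_v,y_v)+J_{G''}))$. The rest of your write-up (identification of the components of $G''$, the Koszul shift by $2$, and the appeal to Lemma \ref{Lemma: Betti numbers of disjoint graphs}(ii) for the product formula) matches the paper and is fine.
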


\begin{proof}
Consider the short exact sequence (\ref{Exact}), with $G = F_{m_1} \circ F$ and $u=v$.\\
If $F = F_{m_2}$, then the graphs involved in (\ref{Exact}) are: $G' = F_{m}^{W,2}$, $G'' = F_{m_1 -1} \sqcup F_{m_2-1}$, and $H=F_{m-1}^{W,2}$, where $m=m_1+m_2-1$, $W = W_1 \sqcup W_2$ with $|W_i| = m_i -1$ for $i=1,2$, and $G'$ and $H$ are $2$-pure fan graph. By Proposition \ref{prop: reg pure fan graph} and Proposition \ref{prop: reg bip}, we have the following values for the regularity
\begin{align*}
r = &\reg S/J_G = \reg S/((x_u,y_u)+J_{G''}) = 6\\
&\reg S/J_{G'} = \reg S/((x_u,y_u)+J_{H}) = 3.
\end{align*}
In the matter of projective dimension, it is equal to $p=n-1$ for all the quotient rings involved in (\ref{Exact}), except for $S/((x_u,y_u)+J_{H})$, for which it is $n$. 
Considering the long exact sequence (\ref{longexact}) with $i=p$ and $j=r$, it holds
\[
\beta_{p,p+r} (S/J_G) = \beta_{p,p+r} (S/((x_u,y_u)+J_{G''})) 
\]
and by Lemma \ref{Lemma: Betti numbers of disjoint graphs} (ii) the second part of thesis follows. \\
The case $F = F_{m_2}^{W,k}$ follows by similar arguments. Indeed, suppose $|W_1| \geq 2$ and $v \in W_1$. The graphs involved in (\ref{Exact}) are: $G' = F_{m}^{W',k}$, $G'' = F_{m_1 -1} \sqcup F_{m_2-1}^{W'',k}$, and $H=F_{m-1}^{W',k}$, where $m=m_1+m_2+|W_1| -2$, all the fan graphs are $k$-pure, $W' = W'_1 \sqcup W_2 \sqcup \cdots \sqcup W_k$, with $|W'_1| = m_1 -1$, whereas $ W'' = W \setminus \{v\}$. Fixing $r = \reg S/J_G = \reg S/((x_u,y_u)+J_{G''}) = k+4$, since $\reg S/J_{G'} = \reg S/((x_u,y_u)+J_{H}) = k+1$, and the projective dimension of all the quotient rings involved in (\ref{Exact}) is $p=n-1$, except for $S/((x_u,y_u)+J_{H})$, for which it is $n$, it follows
\[
\beta_{p,p+r} (S/J_G) = \beta_{p,p+r} (S/((x_u,y_u)+J_{G''})) 
\]
and by Lemma \ref{Lemma: Betti numbers of disjoint graphs} (ii) the second part of the thesis follows. 
\end{proof}

\begin{theorem}\label{Theo: betti number t pallini}
Let $t \geq 2$, $m \geq 3$, and $m_i \geq 3$ for all $i=1, \dots, t$. Let $G=F_{m_1} \circ \cdots \circ F_{m_t} \circ F$, where $F$ denotes either $F_{m}$ or a $k$-pure fan graph $F_{m}^{W,k}$  with $W = W_1 \sqcup \cdots \sqcup W_k$. Let $\{v\} = V(F_{m_1} \circ \cdots \circ F_{m_t} ) \cap V(F)$ and, if $F=F_m^{W,k}$, assume $|W_1| \geq 2$ and $v \in  W_1$. Let $G''$ and $H$ be as in Set-up \ref{setup}, with $u=v$. Then the unique extremal Betti number of $S/J_G$ is given by 
\[
\Ebeta(S/J_G)=\Ebeta(S/J_{G''}) +
\begin{cases}
\Ebeta(S/J_H) &\text{if } m_t=3\\
0 &\text{if } m_t>3
\end{cases}
\]
In particular, if $F = F_{m}$, it is given by
\[
\Ebeta ( S/J_G) = 
\Ebeta(S/J_{F_{m_1} \circ \cdots \circ F_{m_t-1}})  \Ebeta(S/J_{F_{m-1}})  + 
\begin{cases}
\Ebeta(S/J_{H}) &\text{if } m_t=3 \\
0  &\text{if } m_t>3
\end{cases}
\]
where $H = F_{m_1} \circ \cdots \circ F_{m_{t-1}} \circ F_{m+m_t-2}^{W',2}$, and $F_{m+m_t-2}^{W',2}$ is a $2$-pure fan graph of $K_{m+m_t-2}$ on $W'=W_1' \sqcup W_2'$, with $|W_1'|=m_t-1$ and $|W_2'|=m-1$.\\
If $F = F_{m}^{W,k}$, it is given by
\[
\Ebeta ( S/J_G) = 
\Ebeta(S/J_{F_{m_1} \circ \cdots \circ F_{m_t-1}}) \Ebeta(S/J_{F_{m-1}^{W'',k}})  + 
\begin{cases}
\Ebeta(S/J_H ) &\text{if } m_t=3 \\
0  &\text{if } m_t>3
\end{cases}
\]
where $W'' = W \setminus \{v\}$, $H=F_{m_1}\circ \cdots \circ F_{m_{t-1}} \circ F_{m'}^{W''',k}$, with $m'=m+m_t+|W_1|-2$, $W''' = W_1'' \sqcup W_2 \sqcup \cdots \sqcup W_k$, and $|W_1''| = m_t -1$.
\end{theorem}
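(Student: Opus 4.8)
The plan is to run an induction on $t$, the number of $F_{m_i}$ blocks, using the short exact sequence (\ref{Exact}) with $u=v$ exactly as in the proof of Lemma \ref{Lemma: CM-type 2 pallini}. The base case $t=2$ is already handled: when $t=2$ the block $F_{m_1}\circ F_{m_2}\circ F$ is $F_{m_1}\circ(F_{m_2}\circ F)$, and either directly or via Lemma \ref{Lemma: CM-type 2 pallini} (after checking that the inner graph $F_{m_2}\circ F$ has the shape required there, i.e. a graph of the form ``$F$'' with a distinguished leaf--cutpoint pair) we get the asserted formula. The real work is identifying, for general $t$, the three graphs $G'$, $G''$, $H$ attached to $G=F_{m_1}\circ\cdots\circ F_{m_t}\circ F$ by choosing $u=v$, where $v$ is the cutpoint coming from the last gluing $\circ F$ — concretely $v\in V(F)$ is the vertex identified in the last $\circ$-operation, adjacent to a leaf $w$ of $F$ (when $F=F_m^{W,k}$ we take $v\in W_1$ with $|W_1|\ge 2$, so that $v$ has the required degree).

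The key combinatorial step is the explicit description of $G'$, $G''$, $H$. Connecting all neighbours of $v$ in $G$ turns the $\circ$-gluing into a $*$-type configuration: one checks that $G'=F_{m_1}\circ\cdots\circ F_{m_t}\circ F^{(+)}$ where $F^{(+)}$ is the graph obtained from $F$ by completing $v$'s neighbourhood (a fan graph with one fewer ``active'' fan, or a $2$-pure fan graph when $F=F_{m_2}$-type), while removing $v$ gives $G''=\bigl(F_{m_1}\circ\cdots\circ F_{m_{t-1}}\circ F_{m_t-1}\bigr)\sqcup (F\setminus v)$ — the cutpoint $v$ disconnects the last block from the rest, and $F\setminus v$ is again a single bipartite/fan block of the right type. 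Finally $H=G'\setminus v$ is $F_{m_1}\circ\cdots\circ F_{m_{t-1}}\circ F_{m'}^{W''',k}$ as stated, because deleting $v$ from the completed configuration merges $F_{m_t}$ (minus its former overlap) with $F^{(+)}$ into one fan graph $F_{m'}^{W''',k}$ with $m'=m+m_t+|W_1|-2$. Each of $G'$, $G''$, $H$ has strictly fewer $\circ$-blocks ($t-1$ in place of $t$, up to the last block being a leaf-free fan graph), so the induction hypothesis and Lemma \ref{Lemma: CM-type 2 pallini} apply.

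With the graphs identified, the homological bookkeeping is short. Using Proposition \ref{prop: reg pure fan graph}, Proposition \ref{prop: reg bip} and the regularity formulas quoted from \cite{JK}, one records: $\reg S/J_G=\reg S/((x_v,y_v)+J_{G''})$, and $\reg S/J_{G'}=\reg S/((x_v,y_v)+J_H)$ is exactly one less (the latter drops by $1$ because the last block of $G'$ contributes regularity $3$ instead of $6$, or $k+1$ instead of $k+4$). The projective dimensions all equal $p=n-1=\projdim S/J_G$ except $\projdim S/((x_v,y_v)+J_H)=n$. Plugging $i=p$, $j=r$ into the long exact sequence (\ref{longexact}) kills $T_{p,p+r}(S/J_{G'})$ and $T_{p,p+r}(S/((x_v,y_v)+J_H))$ for degree reasons, and kills every Tor module to the left of $T_{p+1,p+1+(r-1)}(S/((x_v,y_v)+J_H))$, so
\[
\beta_{p,p+r}(S/J_G)=\beta_{p-1,p+r-2}(S/J_H)+\beta_{p-2,p+r-2}(S/J_{G''}).
\]
The term $\beta_{p-2,p+r-2}(S/J_{G''})=\Ebeta(S/J_{G''})$ always survives (by Lemma \ref{Lemma: Betti numbers of disjoint graphs}(ii), since $G''$ splits as a disjoint union and the relevant degrees match). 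The term $\beta_{p-1,p+r-2}(S/J_H)$ equals $\Ebeta(S/J_H)$ precisely when $\reg S/J_H=r-1$, and one checks $\reg S/J_H$ equals $r-1$ iff $m_t=3$ and is strictly smaller when $m_t>3$ (because shrinking $F_{m_t}$ to $F_{m_t-1}$ inside $H$ and absorbing it into the neighbouring fan lowers the regularity contribution of that block exactly when $m_t=3$ forces $F_{m_t-1}=K_2$ to vanish into a larger complete piece); hence that term contributes $\Ebeta(S/J_H)$ or $0$ according to the two cases. Combining gives the boxed formula, and the two ``in particular'' statements follow by substituting the explicit $G''$ and $H$ described above and invoking Lemma \ref{Lemma: Betti numbers of disjoint graphs}(ii) once more on $G''=F_{m_1}\circ\cdots\circ F_{m_t-1}\,\sqcup\,(F\setminus v)$ — wait, more precisely $G''$ is the $*$-free disjoint union whose first component is itself a $\circ$-chain, so $\Ebeta(S/J_{G''})$ factors as $\Ebeta(S/J_{F_{m_1}\circ\cdots\circ F_{m_t-1}})\cdot\Ebeta(S/J_{F\setminus v})$.

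**Main obstacle.** The delicate point is not the Tor computation but the graph-theoretic identification of $H$ and the case split on $m_t$: one must verify carefully that $H$ is again of the exact recursive form handled by the theorem (a $\circ$-chain ending in a $k$-pure fan graph with a size-$\ge 2$ fan containing the new cutpoint), that its vertex count and fan sizes are as claimed, and — crucially — that $\reg S/J_H<r-1$ when $m_t>3$, which is where the quoted regularity formula from \cite{JK} must be applied to $H$ and compared against $r$. Getting the boundary behaviour right when $m_t=3$ (so $F_{m_t-1}=K_2$ degenerates) versus $m_t>3$ is the step most likely to need care.
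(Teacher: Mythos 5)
Your proposal follows essentially the same route as the paper: the same short exact sequence (\ref{Exact}) with $u=v$, the same identification of $G''$ and $H$, the same regularity bookkeeping via the formulas from \cite{JK}, and the same long exact sequence argument yielding $\beta_{p,p+r}(S/J_G)=\beta_{p+1,p+r}(S/((x_v,y_v)+J_H))+\beta_{p,p+r}(S/((x_v,y_v)+J_{G''}))$ with the case split on $m_t$. The only blemishes are the early unconditional claim that $\reg S/J_{G'}=\reg S/((x_v,y_v)+J_H)$ is ``exactly one less'' than $r$ (it is $r-3$ when $m_t>3$, as you correctly state later) and the loose description of $G'$ as $F_{m_1}\circ\cdots\circ F_{m_t}\circ F^{(+)}$ rather than the merged chain $F_{m_1}\circ\cdots\circ F_{m_{t-1}}\circ F_{m+m_t-1}^{W',2}$; neither affects the conclusion since you only need $\reg S/J_{G'}<r$ there.
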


\begin{proof}
If $F=F_m$, we have $G'= F_{m_1} \circ \cdots \circ F_{m_{t-1}} \circ F_{m+m_t-1}^{W',2}$, $G'' = F_{m_1} \circ \cdots \circ F_{m_t-1} \sqcup F_{m-1}$, and $H=F_{m_1} \circ \cdots \circ F_{m_{t-1}} \circ F_{m+m_t-2}^{W',2}$, where $W'=W_1'\sqcup W_2'$, with $|W_1'|=m_t-1$ and $|W_2'|=m-1$. As regard the regularity of these quotient rings, we have
\begin{align*}
r &= \reg S/J_G = \reg S/((x_u,y_u)+J_{G''})\\
&= \reg S/J_{F_{m_1-1}} + \reg S/J_{F_{m_2-2}} + \cdots + \reg S/J_{F_{m_t -2}} + \reg S/J_{F_{m-1}}
\end{align*}
and  both $\reg S/J_{G'}$ and $\reg S/((x_u,y_u)+J_H)$ are equal to
\[
\reg S/J_{F_{m_1-1}} + \reg S/J_{F_{m_2-2}} + \cdots + \reg S/J_{F_{m_{t-1} -2}} + \reg S/J_{F_{m+m_t-1}^{W',2}}.
\]
Since $\reg S/J_{F_{m-1}} = \reg S/J_{F_{m+m_t-1}^{W',2}}=3$, whereas if $m_t=3$, $\reg S/J_{F_{m_t -2}}=1$, otherwise $\reg S/J_{F_{m_t -2}}=3$, it follows that 
\[
\reg S/J_{G'} = \reg S/((x_u,y_u)+J_H) = 
\begin{cases}
r-1 &\text{ if } m_t=3\\
r-3 &\text{ if } m_t>3\\
\end{cases}
\]
For the projective dimensions, we have
\begin{eqnarray*}
p &=& \projdim S/J_G = \projdim S/((x_u,y_u)+J_{G''}) \\
&=& \projdim S/J_{G'} = \projdim S/((x_u,y_u)+J_{H}) -1= n-1.
\end{eqnarray*}
Passing through the long exact sequence (\ref{longexact}) of Tor modules, we obtain, if $m_t =3$
\[
\beta_{p,p+r} (S/J_G) = \beta_{p,p+r} (S/((x_u,y_u)+J_{G''})) + \beta_{p+1,(p+1)+(r-1)}(S/((x_u,y_u)+J_H))
\]
and, if $m_t>3$
\[
\beta_{p,p+r} (S/J_G) = \beta_{p,p+r} (S/((x_u,y_u)+J_{G''})).
\]
The case $F=F_{m}^{W,k}$ follows by similar arguments. Indeed, the involved graphs are: $G'=F_{m_1}\circ \cdots \circ F_{m_{t-1}} \circ F_{m'}^{W''',k}$, $G''= F_{m_1}\circ \cdots \circ F_{m_t-1} \sqcup F_{m-1}^{W'',k}$, and $H=F_{m_1}\circ \cdots \circ F_{m_{t-1}} \circ F_{m'-1}^{W''',k}$, where all the fan graphs are $k$-pure, $W'' = W \setminus \{v\}$, $m'=m+m_t+|W_1|-1$, $W''' = W_1'' \sqcup W_2 \sqcup \cdots \sqcup W_k$, and $|W_1''| = m_t -1$. Fixing $r= \reg S/J_G$, we get $\reg S/((x_u,y_u)+J_{G''}) =r$, whereas 
\[
\reg S/J_{G'} = \reg S/((x_u,y_u)+J_H) = 
\begin{cases}
r-1 &\text{ if } m_t=3\\
r-3 &\text{ if } m_t>3\\
\end{cases}
\]
The projective dimension of all the quotient rings involved is $p=n-1$, except for $S/((x_u,y_u)+J_H)$, for which it is $n$. Passing through the long exact sequence (\ref{longexact}) of Tor modules, it follows the thesis. 
\end{proof}

\begin{corollary}
Let $t \geq 2$, $m,m_1 \geq 3$, and $m_i \geq 4$ for all $i=2, \dots, t$. Let $G=F_{m_1} \circ \cdots \circ F_{m_t} \circ F$, where $F$ denotes either $F_{m}$ or a $k$-pure fan graph $F_{m}^{W,k}$  with $W = W_1 \sqcup \cdots \sqcup W_k$. Let $\{v\} = V(F_{m_1} \circ \cdots \circ F_{m_t} ) \cap V(F)$ and, when $F=F_m^{W,k}$, assume $|W_1| \geq 2$ and $v \in  W_1$. Then the unique extremal Betti number of $S/J_G$ is given by 
\[
\Ebeta(S/J_G)=
\begin{cases}
\Ebeta(S/J_{F_{m_1-1}})\prod_{i=2}^t \Ebeta(S/J_{F_{m_i-2}})\Ebeta(S/J_{F_{m-1}})&\text{ if } F=F_m\\
\Ebeta(S/J_{F_{m_1-1}})\prod_{i=2}^t \Ebeta(S/J_{F_{m_i-2}})\Ebeta(S/J_{F_{m-1}^{W',k}}) &\text{ if } F=F_m^{W,k}
\end{cases}
\]
where $W' = W\setminus \{v\}$.

\end{corollary}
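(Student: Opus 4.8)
The plan is to prove this corollary by iterating Theorem~\ref{Theo: betti number t pallini} and invoking Lemma~\ref{Lemma: CM-type 2 pallini} as the base case. Since every $m_i \geq 4$ for $i = 2, \dots, t$, the branch ``$m_t > 3$'' of Theorem~\ref{Theo: betti number t pallini} always applies at the top level, so the correction term vanishes and we obtain, in the case $F = F_m$,
\[
\Ebeta(S/J_G) = \Ebeta(S/J_{F_{m_1} \circ \cdots \circ F_{m_{t}-1}})\,\Ebeta(S/J_{F_{m-1}}),
\]
and similarly with $\Ebeta(S/J_{F_{m-1}^{W'',k}})$ in the case $F = F_m^{W,k}$, where $W'' = W \setminus \{v\}$. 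The key observation is that $F_{m_1} \circ \cdots \circ F_{m_t - 1}$ is again a graph of the form treated in Theorem~\ref{Theo: betti number t pallini}, now with $F = F_{m_t-1}$ playing the role of the last factor and with the chain shortened by one; since $m_t \geq 4$ gives $m_t - 1 \geq 3$, the hypotheses are preserved. So the $\Ebeta(S/J_{F_{m-1}})$ (resp.\ $\Ebeta(S/J_{F_{m-1}^{W'',k}})$) factor splits off cleanly, and we are reduced to computing $\Ebeta(S/J_{F_{m_1} \circ \cdots \circ F_{m_t-1}})$.

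Next I would set up a downward induction on $t$. At each stage the chain $F_{m_1} \circ \cdots \circ F_{m_j-1} \circ \cdots$ (where the last surviving factor $F_{m_j}$ has been decremented to $F_{m_j-1}$, which is an $F_{m_j-1}$-type bipartite graph) is peeled off by one more application of Theorem~\ref{Theo: betti number t pallini} with $u = v$ the gluing vertex between $F_{m_1}\circ\cdots\circ F_{m_{j-1}}$ and $F_{m_j}$. Here the last factor is $F_{m_j}$ with $m_j \geq 4 > 3$, so again the ``$m_t > 3$'' branch applies and the correction term is zero; we split off $\Ebeta(S/J_{F_{m_j-1}})$ and continue. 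Iterating down to $j = 2$ produces the product
\[
\Ebeta(S/J_G) = \Ebeta(S/J_{F_{m_1} \circ F_{m_2-1}}) \prod_{i=3}^{t}\Ebeta(S/J_{F_{m_i-2}}) \cdot
\begin{cases}
\Ebeta(S/J_{F_{m-1}}) & \text{if } F = F_m\\
\Ebeta(S/J_{F_{m-1}^{W',k}}) & \text{if } F = F_m^{W,k}.
\end{cases}
\]
Wait---one must be careful about whether the decremented factors contribute $F_{m_i-1}$ or $F_{m_i-2}$; examining the statement of Theorem~\ref{Theo: betti number t pallini}, when we peel the last factor $F_{m_t}$ off the chain $F_{m_1}\circ\cdots\circ F_{m_t}\circ F$, the graph $G''$ is $F_{m_1}\circ\cdots\circ F_{m_t-1} \sqcup F_{m-1}$, i.e.\ the factor $F_{m_t}$ attached to $F$ is decremented by one (to $F_{m_t-1}$), but then when that $F_{m_t-1}$ is \emph{itself} peeled off in the next step as the last factor of the shorter chain, it gets decremented again, ending at $F_{m_t-2}$; only the very first factor $F_{m_1}$ and the original terminal factor $F$ escape the second decrement. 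This matches the claimed formula with $\Ebeta(S/J_{F_{m_1-1}})$ and $\prod_{i=2}^t \Ebeta(S/J_{F_{m_i-2}})$.

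For the base case of the induction, when the chain has been reduced to two factors $F_{m_1} \circ F_{m_2-1}$ (a $\circ$-composition of two bipartite $F$-graphs), I would invoke Lemma~\ref{Lemma: CM-type 2 pallini}, which gives $\Ebeta(S/J_{F_{m_1}\circ F_{m_2-1}}) = \Ebeta(S/J_{F_{m_1-1}})\,\Ebeta(S/J_{F_{(m_2-1)-1}}) = \Ebeta(S/J_{F_{m_1-1}})\,\Ebeta(S/J_{F_{m_2-2}})$, completing the product. The main obstacle I anticipate is purely bookkeeping: correctly tracking, through each application of Theorem~\ref{Theo: betti number t pallini}, exactly which factors have been decremented once versus twice, and verifying at every step that the regularity inequality forcing the ``$m_t > 3$'' branch genuinely holds---this is where the hypothesis $m_i \geq 4$ for $i \geq 2$ is used (it guarantees that after one decrement the factor is still $\geq 3$, so $\reg S/J_{F_{m_i-2}} = 3 \neq 1$ and the correction term from Theorem~\ref{Theo: betti number t pallini} vanishes). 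Once the induction is properly indexed, each step is a direct citation of Theorem~\ref{Theo: betti number t pallini} (inductive step) or Lemma~\ref{Lemma: CM-type 2 pallini} (base case), with no further computation needed. The two cases $F = F_m$ and $F = F_m^{W,k}$ run in complete parallel, differing only in the terminal factor $\Ebeta(S/J_{F_{m-1}})$ versus $\Ebeta(S/J_{F_{m-1}^{W',k}})$, so I would treat them simultaneously.
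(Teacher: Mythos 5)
Your proposal is correct and follows essentially the same route as the paper: apply Theorem \ref{Theo: betti number t pallini} at the top level, where $m_t\geq 4$ forces the zero branch, peel off one factor at a time (so that each intermediate $F_{m_i}$ is decremented once as the penultimate factor and once as the terminal factor, yielding $F_{m_i-2}$), and finish with Lemma \ref{Lemma: CM-type 2 pallini} on $F_{m_1}\circ F_{m_2-1}$. Your careful bookkeeping of the single versus double decrements is exactly the content the paper leaves implicit in its one-line ``repeating the same argument.''
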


\begin{proof}
By Theorem \ref{Theo: betti number t pallini} and by hypothesis on the $m_i$'s, we get
\[
\Ebeta(S/J_G) = \Ebeta(S/J_{F_{m_1}} \circ \cdots \circ F_{m_{t-1}})  \Ebeta(S/J_{F_{m-1}}).
\]
Repeating the same argument for computing the extremal Betti number of $S/J_{F_{m_1} \circ \cdots \circ F_{m_t-1} }$, and by Lemma \ref{Lemma: CM-type 2 pallini}, we have done. 
\end{proof}

\begin{remark}
Contrary to what we believe for bipartite graphs $F_m$ and $k$-pure fan graphs $F_m^{W,k}$ (see Question \ref{Rem: extremal betti = CM-type}), in general for a Cohen-Macaulay bipartite graph $G=F_{m_1} \circ \cdots \circ F_{m_t}$, with $t \geq 2$, the unique extremal Betti number of $S/J_G$ does not coincide with the Cohen-Macaulay type  of $S/J_G$, for example for $G=F_4 \circ F_3$, we have $5 = \Ebeta(S/J_G) \neq \CMtype(S/J_G) =29 $.
\end{remark}

\end{document}